\newtheorem{theorem}{Theorem}[section]
\newtheorem{lemma}[theorem]{Lemma}
\newtheorem{prop}[theorem]{Proposition}
\newtheorem*{Theorem1'}{Theorem 1'}
\theoremstyle{definition}
\newtheorem{definition}[theorem]{Definition}
\theoremstyle{remark}
\numberwithin{equation}{section}
\newcommand \Hom{{\mathrm {Hom}}}
\newcommand \GL{{\mathrm {GL}}}
\newcommand \gl{{\mathfrak {gl}}}
\renewcommand \sl{{\mathfrak {sl}}}
\newcommand \g{{\mathfrak {g}}}
\newcommand \h{{\mathfrak {h}}}
\renewcommand \a{{\mathfrak {a}}}
\newcommand \la{{\lambda}}
\newcommand \al{{\alpha}}
\newcommand \be{{\beta}}
\newcommand \de{{\delta}}
\newcommand \ga{{\gamma}}
\newcommand \B{{\mathcal B}}
\newcommand \M{{\mathcal M}}
\newcommand \ad{{\mathrm {ad}}}
\renewcommand \k{{\mathfrak {k}}}
\renewcommand \h{{\mathfrak {h}}}
\begin{document}

\title[Indecomposable modules of solvable Lie algebras]{Indecomposable modules of solvable Lie algebras}

\author{Paolo Casati}
\address{Dipartimento di Matematica e Applicazioni, University of Milano-Bicocca, Milano, Italy}
\email{paolo.casati@unimib.it}

\author{Andrea Previtali}
\address{Dipartimento di Matematica e Applicazioni, University of Milano-Bicocca, Milano, Italy}
\email{andrea.previtali@unimib.it}

\author{Fernando Szechtman}
\address{Department of Mathematics and Statistics, Univeristy of Regina, Canada}
\email{fernando.szechtman@gmail.com}

\subjclass[2010]{17B10, 17B30}



\keywords{uniserial module; indecomposable module; Clebsch-Gordan formula}

\begin{abstract} We classify all uniserial modules of the solvable Lie algebra
$\g=\langle x\rangle \ltimes V$, where~$V$ is an abelian Lie algebra
over an algebraically closed field of characteristic 0 and~$x$ is an arbitrary automorphism of $V$.
\end{abstract}

\maketitle

\section{Introduction}

Let $F$ be an algebraically closed field of characteristic 0. All vector spaces, including all Lie algebras and their modules, are assumed to be finite dimensional over $F$.\par
Recall that a module is said to be indecomposable if
it cannot be decomposed as the direct sum of two non-trivial submodules.
Naturally, knowing all indecomposable modules of a given Lie algebra would provide a complete description of all its modules.
Unfortunately,  the problem of classifying all indecomposable modules of a given Lie algebra -that is not semisimple or one-dimensional- is virtually unsolvable, even in the case of the two-dimensional abelian Lie algebra, as observed in a celebrated paper by Gelfand and Ponomarev \cite{GP}.\par
In spite  of this fact, many types of indecomposable modules of non-semisimple Lie algebras have been
recently classified, see for example \cite{CGS,CMS,CS,CS1,D,DdG,DP,DR,P}\par
In all these papers the central idea is to consider particular classes of indecomposable modules for which a complete classification can be achieved.
Besides the irreducible modules, the simplest type of indecomposable module is,
in a certain sense, the uniserial one. This is a module having a unique composition series, i.e. a non-zero module whose submodules
form a chain. Alternatively, such modules can be  defined as follows.\par
Let $\g$ be a given Lie algebra
and let $U$ be a non-zero $\g$--module.  The  socle series $$0 = \mathrm{soc}_0(U) \subset \mathrm{soc}_1(U) \subset \cdots \subset  \mathrm{soc}_k(U) = U$$  of $U$  is  inductively
defined by declaring
$\mathrm{soc}_i(U)/\mathrm{soc}_{i-1}(U )$  to be the socle of $ U/\mathrm{soc}_{i-1}(U )$, that is, the sum of all irreducible submodules of
$U /\mathrm{soc}_{i-1}(U )$ , for $1\leq  i \leq k$. Then $U$ is uniserial if and only if the socle series of
$U$ has irreducible factors. \par
In the last years,  the classification of the uniserial  modules of important classes of solvable and perfect Lie algebras  has been achieved in various research papers \cite{CGS,CS,CS1,Pi,C}.
In particular, \cite{Pi} and \cite{C} classify a wider class of modules, called cyclic in  \cite{Pi}
and perfect cyclic in \cite{C}, over the perfect Lie algebras $\mathfrak{sl}(2)\ltimes F^2$ and
$\mathfrak{sl}(n+1)\ltimes F^{n+1}$, for $F=\mathbb{C}$, respectively.\par
The aim of this paper is to proceed further in the study of uniserial modules. We shall, indeed, classify the uniserial modules of a distinguished class
of solvable Lie algebras, namely those of the form $\g=\langle x\rangle \ltimes V$, where $V$ is an abelian Lie algebra and $x$ is an arbitrary automorphism of $V$.\par

A proper ideal $\a$ of $\g$ is of the form $\a=W$, where $W$ is an $x$-invariant subspace of~$V$. Thus, either $W=V$ and
$\g/\a\cong \langle x\rangle$ is one-dimensional, or else $\g/\a\cong \langle \overline{x}\rangle\ltimes  \overline{V}$, where $\overline{V}=V/W\neq (0)$
and $\overline{x}$ is the automorphism that $x$ induces on~$\overline{V}$.

We know from \cite{CS} all uniserial modules over an abelian Lie algebra as well as all uniserial $\g$-modules when
$x$ is diagonalizable. Thus, it suffices to classify all \emph{faithful} uniserial $\g$-modules when $x$ is
not diagonalizable. In this regard, our main results are as follows.

In \S\ref{sec1} we construct a family of non-isomorphic faithful uniserial representations of $\g$ when $x$ acts on $V$ via a single Jordan block of size $n>1$. This family consists of all matrix representations
\begin{equation}
\label{listrep}
R_{\al,k,X}\to\gl(n+1),\; R_{\al,n}\to\gl(n+1),\; R_{\al,1}\to\gl(n+1),
\end{equation}
where
$$
\al\in F,\; 1<k<n,\; X\in M_{k-1,n-k},
$$
as well as the matrix representations
\begin{equation}
\label{listrep2}
R_{\al,a}\to\gl(n+2),
\end{equation}
which exist only for odd $n$, and where
$$
\al\in F,\; a=(a_1,\dots,a_n),\; a_1=1,\; a_i=0\text{ for all even }i.
$$

In \S\ref{sec2}, we show that if $x$ acts on $V$ via a single Jordan block of size $n>1$, then every faithful uniserial $\g$-module is isomorphic to one and only one of the representations appearing in (\ref{listrep}) and (\ref{listrep2}).

In \S\ref{sec3} we deal with the general case.
By our results from \S\ref{sec2}, we may assume that $x$ has $e$ Jordan blocks, where $e>1$. Moreover, as indicated above,
we may also assume that $x$ is not diagonalizable. Under these assumptions. Theorem \ref{main2} gives necessary and sufficient
conditions for $\g$ to have a faithful uniserial module and classifies all such modules whenever these conditions are satisfied.

Indeed, let
\begin{equation}
\label{asjo}
V=V_1\oplus\cdots\oplus V_e
\end{equation}
be a decomposition of $V$ into indecomposable $F[x]$-submodules (this means that $x$ acts on each $V_i$ via a Jordan block) of dimensions $$n=n_1\geq\dots\geq n_e,$$
where $n>1$ because $x$ is not diagonalizable. For each $1\leq i\leq e$, consider the subalgebra $\g_i=\langle x_i\rangle \ltimes V_i$ of $\g$, where $x_i=x|_{V_i}$.

Suppose that $\g$ has a faithful uniserial representation $R:\g\to\gl(d)$. Then the restriction $R_1:\g_1\to\gl(d)$ is already uniserial.
In particular, $$d=n+1\text{ or }d=n+2.$$
In the first case, $R_1$ is isomorphic to a unique $R_{\al,k,X}$, the automorphism
$x$ has a single eigenvalue $\la$, and the Jordan decomposition of $x$ is
\begin{equation}
\label{asjo0}
J^{n}(\lambda)\oplus J^{n_2}(\lambda)\oplus\cdots\oplus J^{n_e}(\lambda),
\end{equation}
where
\begin{equation}
\label{asjo2}
n_2\leq n-2,\; n_3\leq n-4,\; n_4\leq n-6,\dots,\; n_e\leq n-2(e-1),
\end{equation}
and
\begin{equation}
\label{asjo3}
e\leq \mathrm{min}\{k,n+1-k\}.
\end{equation}
In the second case, $R_1$ is isomorphic to a unique $R_{\al,a}$, $n$ is odd, the automorphism $x$ has two eigenvalues $\la$ and $2\la$, and the Jordan decomposition of $x$ is
\begin{equation}
\label{asjo4}
J^{n}(\lambda)\oplus J^{1}(2\lambda),
\end{equation}
so that $e=2$ and $n_2=1$.

Conversely, if $x$ has a single eigenvalue, $1<k<n$, and (\ref{asjo2})-(\ref{asjo3}) are satisfied, then $R_{\al,k,X}$ can be extended
to a faithful uniserial representation of $\g$. In fact, let $\M_{k,n+1-k}$ be subspace of $\gl(n+1)$ consisting of all matrices
$$
\left(
           \begin{array}{cc}
             0 & N\\
             0 & 0\\
             \end{array}
         \right),\quad N\in M_{k\times n+1-k}.
$$
Let $v$ be a generator of the $F[x]$-module $V_1$ and set
$$A=R_{\al,k,X}(x)=\left(
           \begin{array}{cc}
             J^k(\al) & 0\\
             0 & J^{n+1-k}(\al-\la)\\
             \end{array}
         \right)\in\gl(n+1),\; E=R_{\al,k,X}(v)\in \M_{k,n+1-k}.
$$
Then the extensions of $R_{\al,k,X}$ to a faithful uniserial representation of $\g$ are given by all possible $F[t]$-monomorphisms $V\to\M_{k,n+1-k}$ such that $v\to E$, where $t$ acts via $\ad_\g x-\la 1_\g$ on $V$ and via $\ad_{\gl(n+1)}A -\la 1_{\gl(n+1)}$ on $\M_{k,n+1-k}$. Moreover,
all such extensions produce non-isomorphic representations of $\g$. Illustrative examples are provided in \S\ref{sec4}.

Likewise, if $x$ has Jordan decomposition (\ref{asjo4}), then $R_{\al,a}$ can be extended
to a faithful uniserial representation of $\g$. We determine all such extensions and prove that they produce non-isomorphic representations
of $\g$ (this case is much simpler than the above and no examples are required).

Finally, a necessary and sufficient condition for $\g$ to have a faithful uniserial representation
is that $x$ has Jordan decomposition (\ref{asjo0}) and (\ref{asjo2}) holds, or that $x$ has Jordan decomposition (\ref{asjo4})
and $n$ is odd.

Perhaps surprisingly, the representation theory of $\sl(2)$, and in particular the Clebsch-Gordan formula, plays a decisive role
in our study and classification of uniserial $\g$-modules.


\section{Construction of uniserial representations}\label{sec1}

Given $p\geq 1$ and $\al\in F$,
we write $J_p(\al)$ (resp. $J^p(\al)$) for the lower (resp. upper) triangular Jordan block of size $p$ and eigenvalue $\al$.
We also let $E^{i,j}\in\gl(p)$ stand for the matrix with entry $(i,j)$ equal to 1 and all other entries equal to~0.

We suppose throughout this section that $\g=\langle x\rangle \ltimes V$, where $V$ is an abelian Lie algebra and $x\in\gl(V)$ acts on $V$ via a single, lower triangular, Jordan block, say $J_n(\la)$, relative to a basis
$v_0,\dots,v_{n-1}$ of $V$. The case $\la=0$ is allowed. The multiplication table for $\g$ relative to its basis $x,v_{0},\dots,v_{n-1}$ is:
\begin{equation}
\label{rela}
[x,v_0]=\la v_0+v_1, [x,v_1]=\la v_1+v_2,\dots, [x,v_{n-1}]=\la v_{n-1}.
\end{equation}
We may translate (\ref{rela}) into
\begin{equation}
\label{rela1}
(\ad_\g\, x -\lambda 1_\g)^k v_0=v_k,\quad 0\leq k\leq n-1,
\end{equation}
and
\begin{equation}\label{cuc}
(\ad_\g x -\lambda 1_\g)^n v_0=0.
\end{equation}

\begin{prop}\label{sl2} Given positive integers $p,q$, let $\M_{p,q}$ be subspace of $\gl(p+q)$ consisting of all matrices
$$
\widehat{N}=\left(
           \begin{array}{cc}
             0 & N\\
             0 & 0\\
             \end{array}
         \right),\quad N\in M_{p\times q}.
$$
Given $\al,\la\in F$, set
$$
A=J^p(\al)\oplus J^{q}(\al-\la).
$$
Let $\theta$ stand for the endomorphism $\ad_{\gl(p+q)} A-\lambda 1_{\gl(p+q)}$ of $\gl(p+q)$ restricted to its invariant subspace  $\M_{p,q}$.
Then $\theta$ is nilpotent with elementary divisors
$$
t^{p+q-1},t^{(p+q-1)-2},t^{(p+q-1)-4},\dots,t^{(p+q-1)-2z},
$$
where
$$
z=\mathrm{min}\{p-1,q-1\}.
$$
Moreover, given any $N\in M_{p\times q}$,
the minimal polynomial of $\widehat{N}$ with respect to $\theta$ is $t^{p+q-1}$ if and only if $N_{p,1}\neq 0$.
\end{prop}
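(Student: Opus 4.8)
The plan is to compute $\theta$ explicitly on $\M_{p,q}$ and then read off everything from the representation theory of $\sl(2)$. Writing $A=J^p(\al)\oplus J^q(\al-\la)$, a direct block computation gives $\ad_{\gl(p+q)}A\,(\widehat N)=\widehat M$, where $M=J^p(\al)N-NJ^q(\al-\la)$. Splitting off the scalar parts via $J^p(\al)=\al\,1+J^p(0)$ and $J^q(\al-\la)=(\al-\la)\,1+J^q(0)$, the scalar coefficients contribute a net term $\al N-(\al-\la)N=\la N$, which is cancelled precisely by the subtraction of $\la\widehat N$ built into $\theta$. Hence $\theta(\widehat N)=\widehat M$ with $M=J^p(0)N-NJ^q(0)$, so that, identifying $\M_{p,q}$ with $M_{p\times q}$, the operator $\theta$ is independent of $\al$ and $\la$ and acts by $N\mapsto J^p(0)N-NJ^q(0)$.

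For the elementary divisors I would invoke the Clebsch--Gordan formula. Regard $F^p$ as the irreducible $\sl(2)$-module of dimension $p$ on which a fixed nilpotent element of an $\sl(2)$-triple acts as $J^p(0)$, and similarly for $F^q$. Then $M_{p\times q}\cong F^p\otimes(F^q)^*$ as $\sl(2)$-modules, and by the Leibniz rule the nilpotent element acts on this tensor product exactly by $N\mapsto J^p(0)N-NJ^q(0)$, i.e.\ as $\theta$. Since $\sl(2)$-modules are self-dual, Clebsch--Gordan gives
\[
F^p\otimes F^q\cong W_{p+q-1}\oplus W_{p+q-3}\oplus\cdots\oplus W_{p+q-1-2z},\qquad z=\min\{p-1,q-1\},
\]
where $W_m$ denotes the irreducible module of dimension $m$. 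On each $W_m$ the distinguished nilpotent acts as a single Jordan block, contributing the elementary divisor $t^m$; collecting the summands yields exactly $t^{p+q-1},t^{p+q-3},\dots,t^{p+q-1-2z}$. In particular $\theta^{p+q-1}=0$.

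For the final statement, since $t^{p+q-1}$ is the largest elementary divisor, the minimal polynomial of $\widehat N$ with respect to $\theta$ is $t^{p+q-1}$ if and only if $\theta^{p+q-2}(\widehat N)\neq0$, and I would settle this by a direct computation. The operators $L\colon N\mapsto J^p(0)N$ and $R\colon N\mapsto NJ^q(0)$ commute, since one acts on rows and the other on columns, with $L^a(N)_{i,j}=N_{i+a,j}$ and $R^b(N)_{i,j}=N_{i,j-b}$ (entries with out-of-range indices read as $0$). Hence
\[
\theta^{m}(N)_{i,j}=\sum_{s=0}^{m}\binom{m}{s}(-1)^sN_{i+m-s,\,j-s}.
\]
Setting $m=p+q-2$ and imposing $1\le i+m-s\le p$ and $1\le j-s\le q$, I expect the index constraints to force a valid $s$ to exist only for $(i,j)=(1,q)$, and then only $s=q-1$, giving $\theta^{p+q-2}(N)_{1,q}=(-1)^{q-1}\binom{p+q-2}{q-1}N_{p,1}$ with every other entry equal to $0$. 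As $\binom{p+q-2}{q-1}\neq0$ in characteristic $0$, this shows $\theta^{p+q-2}(\widehat N)\neq0$ if and only if $N_{p,1}\neq0$, which is the claim.

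I would expect the main obstacle to be the $\sl(2)$ bookkeeping rather than any hard estimate: getting the identification $M_{p\times q}\cong F^p\otimes(F^q)^*$ and the self-duality conventions correct so that $\theta$ genuinely is the action of the distinguished nilpotent element, and then confirming that each Clebsch--Gordan summand really contributes a single Jordan block. The last statement could alternatively be phrased invariantly, by identifying $N_{p,1}$ with the coordinate of $\widehat N$ along the highest-weight generator of the top summand $W_{p+q-1}$; but locating that generator explicitly is more delicate than the elementary index computation above, which is why I would prefer to prove the final assertion by the direct calculation.
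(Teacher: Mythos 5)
Your proposal is correct. For the elementary divisors you follow essentially the same route as the paper: you reduce $\theta$ to $N\mapsto J^p(0)N-NJ^q(0)$ by cancelling the scalar parts against the $-\la$ shift, identify $M_{p\times q}$ with $\Hom$ of two irreducible $\sl(2)$-modules on which the raising operator acts by the given Jordan blocks, and invoke self-duality plus Clebsch--Gordan; the paper does exactly this, writing $V(a)\otimes V(b)\cong V(a)^*\otimes V(b)\cong \Hom(V(a),V(b))\cong\M_{a+1,b+1}$ with $a=p-1$, $b=q-1$. Where you genuinely diverge is the final assertion. The paper decomposes $\M_{p,q}$ into the cyclic $F[e]$-submodules generated by the lowest weight vectors $\widehat{E(0)},\dots,\widehat{E(r)}$, observes that only $E(0)$ has nonzero $(p,1)$ entry and that anything in the image of $e$ has zero $(p,1)$ entry, and concludes that $N_{p,1}\neq0$ is equivalent to a nonzero component along $\widehat{E(0)}$. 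You instead expand $\theta^{p+q-2}=(L-R)^{p+q-2}$ binomially (legitimate since $L$ and $R$ commute) and check that the index constraints force the only surviving entry to be $\theta^{p+q-2}(N)_{1,q}=(-1)^{q-1}\binom{p+q-2}{q-1}N_{p,1}$; combined with $\theta^{p+q-1}=0$ from the first part, this gives the criterion directly. Your computation is correct (I verified the index bookkeeping: $s\ge i+q-2$ and $s\le j-1$ force $(i,j)=(1,q)$ and $s=q-1$) and is more elementary and self-contained for this step. What the paper's weight-vector argument buys in exchange is the explicit generators $E(i)$ and the decomposition $\M_{p,q}=W(0)\oplus\cdots\oplus W(r)$, which are reused later (in Theorem \ref{main2}(2) and the examples of \S\ref{sec4}) to construct the extensions to larger algebras; your shortcut does not produce these.
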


\begin{proof} For $a\geq 0$, let $V(a)$ stand for the irreducible $\sl(2)$-module with highest weight $a$. The Clebsch-Gordan formula states
that
\begin{equation}
\label{mfre}
V(a)\otimes V(b)\cong V(a+b)\oplus V(a+b-2)\oplus V(a+b-4)\oplus\cdots\oplus V(a+b-2r),
\end{equation}
where
$$
r=\mathrm{min}\{a,b\}.
$$
Let
$$
e=\left(
           \begin{array}{cc}
             0 & 1\\
             0 & 0\\
             \end{array}
         \right), h=\left(
           \begin{array}{cc}
             1 & 0\\
             0 & -1\\
             \end{array}
         \right), f=\left(
           \begin{array}{cc}
             0 & 0\\
             1 & 0\\
             \end{array}
         \right)
$$
stand for the canonical basis of $\sl(2)$. It is well-known that $e$ acts nilpotently on $V(a)$
with a single elementary divisor, namely $t^{a+1}$. It follows from the Clebsch-Gordan formula that $e$
acts nilpotently on $V(a)\otimes V(b)$ with elementary divisors
$$
t^{a+b+1},t^{(a+b+1)-2},t^{(a+b+1)-4},\dots,t^{(a+b+1)-2r}.
$$
For $a\geq 0$, let $R_a:\sl(2)\to\gl(a+1)$ be the matrix representation afforded by $V(a)$ given by
\begin{equation}
\label{sr1}
R_a(h)=\mathrm{diag}(a,a-2,\dots,-a+2,-a),
\end{equation}
\begin{equation}
\label{sr2}
R_a(e)=J^{a+1}(0),
\end{equation}
\begin{equation}
\label{sr3}
R_a(f)=\mathrm{diag}(0,a,2(a-1),3(a-2),\dots,3(a-2),2(a-1),a)J_{a+1}(0).
\end{equation}
Now
$$
V(a)\otimes V(b)\cong V(a)^*\otimes V(b)\cong\Hom(V(a),V(b)),
$$
where
\begin{equation}
\label{sr4}
(y\cdot \phi)(v)=y\cdot\phi(v)-\phi(y\cdot v),\quad y\in\sl(2),\phi\in \Hom(V(a),V(b)),v\in V.
\end{equation}
It follows that
\begin{equation}
\label{mfre2}
V(a)\otimes V(b)\cong \M_{a+1,b+1},
\end{equation}
where
\begin{equation}
\label{sesa}
y\cdot  \left(\begin{array}{cc}
             0 & N\\
             0 & 0\\
             \end{array}
         \right)= \left[\left(\begin{array}{cc}
             R_a(y) & 0\\
             0 & R_b(y)\\
             \end{array}
         \right),  \left(\begin{array}{cc}
             0 & N\\
             0 & 0\\
             \end{array}
         \right)\right], \quad y\in\sl(2).
\end{equation}
On the other hand, letting $B=J^p(0)\oplus J^{q}(0)$, we readily verify that
$$
(\ad_{\gl(p+q)} B)\widehat{N}=(\ad_{\gl(p+q)} A-\la 1_{\gl(p+q)})\widehat{N},\quad N\in M_{p\times q},
$$
which means that $\theta$ is the restriction of $\ad_{\gl(p+q)} B$ to
$\M_{p,q}$. Setting $a=p-1$ and $b=q-1$ and using (\ref{sr2}) as well as (\ref{sesa}), we deduce that $\theta$ is nothing but the action of $e$ on $\M_{a+1,b+1}$. The stated elementary divisors for $\theta$ now follow from those of the action of $e$ on $V(a)\otimes V(b)$.


Using (\ref{sr1}) and (\ref{sesa}) we find that, for $0\leq i\leq r$, the $h$-eigenspace of $\M_{p,q}$ with eigenvalue $-(a+b)+2i$, say $S(i)$, consists of all $\widehat{Q}$ such that the entries of $Q$ outside of its $i$th lower diagonal are equal to 0. Here the 0th lower diagonal consists of position $(p,1)$, the 1st lower diagonal of positions $(p,2),(p-1,1)$, the 2nd lower diagonal of positions $(p,3),(p-1,2),(p-2,1)$, and so on.

Each lowest weight vector of $\M_{p,q}$ generates an irreducible $\sl(2)$-submodule. In view of the multiplicity-free decomposition (\ref{mfre})
and the isomorphism (\ref{mfre2}), we see that for each $0\leq i\leq r$, there is one and only one $\widehat{0}\neq \widehat{E(i)}\in S(i)$, up to scaling, such that
\begin{equation}\label{eid}
f\cdot \widehat{E(i)}=0.
\end{equation}
Letting $W(i)$ be the $\sl(2)$-submodule generated by $\widehat{E(i)}$, we have
\begin{equation}\label{comre}
\M_{p,q}=W(0)\oplus\cdots\oplus W(r).
\end{equation}
Given an arbitrary $N\in M_{p\times q}$, let us write $\widehat{N}$ in terms of (\ref{comre}). We have
$$
\widehat{N}=w(0)+w(1)\cdots+w(r),\quad w(i)\in W(i),
$$
where,
$$
w(0)=\alpha_0 \widehat{E(0)}+\alpha_1 e\cdot \widehat{E(0)}+\cdots+\alpha_{a+b} e^{a+b}\cdot \widehat{E(0)},\quad \alpha_i\in F.
$$
From the first part of the Theorem, we know that, relative to the action of $e$, the minimal polynomial of $\widehat{E(0)}$ is $t^{a+b+1}$, while
$t^{a+b-1}$ annihilates all $w(i)$, $i>0$. It follows that the minimal polynomial of $\widehat{N}$ is $t^{a+b+1}$ if and only if $\al_0\neq 0$.
On the other hand, given that $\widehat{E(i)}\in S(i)$, it follows that every $E(i)$, $i>1$, has entry $(p,1)$ equal to 0, whereas entry $(p,1)$ of $E(0)$ is not 0. Moreover, using (\ref{sr2}) and (\ref{sesa}) we find that if $\widehat{P}=e\cdot \widehat{Q}$, then entry $(p,1)$ of $P$ is equal to 0 for any~$Q$. Thus, $N_{p,1}\neq 0$
if and only if $\al_0\neq 0$, as required.
\end{proof}

\begin{prop}\label{conex} Given $\al\in F$, positive integers $p,q$, and $N\in M_{p\times q}$ such that $N_{p,1}\neq 0$, consider the linear map $R=R_{\al,p,q, N}:\g\to\gl(p+q)$
given by
$$
x\mapsto A=
\left(
  \begin{array}{c|c}
    J^p(\al) & 0 \\\hline
    0 & J^q(\al-\la) \\
  \end{array}
\right)
$$
and
$$
v_k\mapsto (\ad_{\gl(p+q)} A-\la 1_{\gl(p+q)})^k\left(\begin{array}{c|c}
             0 & N\\\hline
             0 & 0\\
             \end{array}
         \right),\quad 0\leq k\leq n-1.
$$
Then $R$ is a representation of $\g$ if and only if $p+q-1\leq n$, in which case $R$ is uniserial. Moreover, $R$ is a faithful
representation if and only if $p+q-1=n$.
\end{prop}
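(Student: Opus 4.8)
The plan is to treat the three assertions in turn. First, since $R$ is defined on the basis $x,v_0,\dots,v_{n-1}$ and extended linearly, I only need to check that it respects the bracket relations (\ref{rela}). The relations $[v_i,v_j]=0$ will hold automatically, because $R(v_i),R(v_j)\in\M_{p,q}$ and the product of any two matrices of $\M_{p,q}$ is $0$, so $[R(v_i),R(v_j)]=0$. For $[x,v_k]=\la v_k+v_{k+1}$ with $0\le k\le n-2$, I would use that $\ad_{\gl(p+q)}A=\theta+\la 1$ on $\M_{p,q}$ to compute $[R(x),R(v_k)]=(\theta+\la)\theta^k\widehat N=\theta^{k+1}\widehat N+\la\theta^k\widehat N=R(v_{k+1})+\la R(v_k)$, which matches $R([x,v_k])$ identically. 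The only relation imposing a condition is the top one, $[x,v_{n-1}]=\la v_{n-1}$: here $[R(x),R(v_{n-1})]=\theta^n\widehat N+\la\theta^{n-1}\widehat N$, so the relation holds if and only if $\theta^n\widehat N=0$. Since $N_{p,1}\neq0$, Proposition \ref{sl2} tells me the minimal polynomial of $\widehat N$ relative to $\theta$ is $t^{p+q-1}$, whence $\theta^n\widehat N=0$ exactly when $n\ge p+q-1$. This gives the first claim.

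Next, assuming $p+q-1\le n$, I would prove uniseriality by analysing the submodule lattice directly. Writing $U=F^{p+q}=P\oplus Q$ with $P$ the first $p$ and $Q$ the last $q$ coordinates, I note that $A$ is block diagonal and every matrix of $\M_{p,q}$ kills $P$ and carries $Q$ into $P$; hence $P$ is a $\g$-submodule on which $V$ acts trivially and $x$ acts as $J^p(\al)$, while on $U/P\cong Q$ the subalgebra $V$ acts trivially and $x$ acts as $J^q(\al-\la)$. Because a single Jordan block has a totally ordered lattice of invariant subspaces and $V$ acts as $0$ on both $P$ and $U/P$, the $\g$-submodules of each coincide with the $F[x]$-submodules, so both $P$ and $U/P$ are already uniserial.

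The crux, and what I expect to be the main obstacle, is to show that every $\g$-submodule $S$ of $U$ is comparable with $P$; once this is established, the submodules contained in $P$ and those containing $P$ each form a chain and together account for all submodules, so the full lattice is a chain and $U$ is uniserial. To prove comparability I would assume $S\not\subseteq P$ and show $P\subseteq S$. The image of $S$ in the uniserial module $U/P$ is then nonzero, hence contains its socle $\langle\overline{e}_{p+1}\rangle$, so some $w\in S$ satisfies $w\equiv e_{p+1}\pmod P$. Applying $R(v_0)=\widehat N$, which annihilates $P$, yields $R(v_0)w=\widehat N e_{p+1}$, the first column of $N$, an element $c\in S\cap P$ with $p$-th coordinate $N_{p,1}\neq0$. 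Since $(A-\al)$ acts on $P$ as a nilpotent Jordan block and $c$ has nonzero top coordinate, the iterates $c,(A-\al)c,\dots,(A-\al)^{p-1}c$ are linearly independent and so generate all of $P$ as an $F[x]$-module; as $V$ acts trivially on $P$, this is the $\g$-submodule generated by $c$, giving $P\subseteq S$. This step is exactly where the hypothesis $N_{p,1}\neq0$ is indispensable.

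Finally, for faithfulness I would locate $\ker R$ through the vanishing of the $R(v_k)=\theta^k\widehat N$, which by the minimal polynomial $t^{p+q-1}$ are zero precisely for $k\ge p+q-1$. If $p+q-1<n$, then $v_{p+q-1}$ is one of the basis vectors and lies in $\ker R$, so $R$ is not faithful. If $p+q-1=n$, then $\widehat N,\theta\widehat N,\dots,\theta^{n-1}\widehat N$ are linearly independent in $\M_{p,q}$ while $R(x)=A$ is a nonzero block-diagonal matrix lying outside $\M_{p,q}$; decomposing a hypothetical kernel element $cx+\sum_k b_k v_k$ into its $\M_{p,q}$-part and its complement forces $c=0$ and then every $b_k=0$, so $\ker R=0$. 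This yields the equivalence between faithfulness and $p+q-1=n$, completing the plan.
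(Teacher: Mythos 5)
Your proposal is correct and follows essentially the same route as the paper: both reduce the representation condition and the faithfulness criterion to the minimal-polynomial statement of Proposition \ref{sl2} applied to $\widehat{N}$ under $\theta=\ad_{\gl(p+q)}A-\la 1_{\gl(p+q)}$. The only real difference is that you supply a full submodule-lattice argument (via the $\g$-submodule $P$ spanned by the first $p$ coordinates and the comparability of every submodule with $P$, using $N_{p,1}\neq 0$) for the uniseriality claim, which the paper simply declares obvious; note only the small slip where you call the relevant coordinate of $c$ its ``top'' coordinate when it is the $p$-th (bottom) one, which is what cyclicity under $J^p(0)$ actually requires.
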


\begin{proof} By construction, $R$ preserves the following relations of $\g$: $$[v_i,v_j]=0,\quad (\ad_\g\, x -\lambda 1_\g)^k v_0=v_k,\; 0\leq k\leq n-1.$$
On the other hand, due to Proposition \ref{sl2}, $(\ad_{\gl(p+q)} A-\la 1_{\gl(p+q)})^{n}\widehat{N}=0$ if and only if $p+q-1\leq n$, which
means that $R$ preserves the last defining relation of $\g$, namely $$(\ad_\g x -\lambda 1_\g)^n v_0=0,$$ if and only if $p+q-1\leq n$.
Thus, condition $p+q-1\leq n$ alone determines whether $R$ is a representation or not. Suppose that indeed $p+q-1\leq n$. It is obvious
that $R$ is uniserial. Moreover, $R(v_0),\dots,R(v_{n-1})$ are linearly independent if and only if the minimal polynomial
of $\widehat{N}$ with respect to $\ad_{\gl(p+q)} A-\la 1_{\gl(p+q)}$ has degree~$n$. By Proposition \ref{sl2}, this happens if and only if $p+q-1=n$.
\end{proof}

Given $\al\in F$, $1\leq k\leq n$,
 and $X\in M_{k-1,n-k}$, we set $p=k$, $q=n+1-k$ and
$$
N=\left(
    \begin{array}{c|c}
      0 & X \\\hline
      1 & 0 \\
    \end{array}
  \right)\in M_{k,n+1-k}.
$$
Then Proposition \ref{conex} ensures that
$$R_{\al,k,X}=R_{\al,p,q,N}:\g\to\gl(n+1)$$
is a faithful uniserial representation of $\g$. In the extreme cases $k=n$ and $k=1$ there is no~$X$, and $N$ is respectively equal to
$$
\left(
    \begin{array}{c}
      0 \\
      \vdots \\
      0 \\
      1 \\
    \end{array}
  \right)
\in M_{n\times 1}\text{ and }(1,0,\dots,0)\in M_{1\times n}.
$$
The corresponding representations will be respectively denoted by $R_{\al,n}$ and $R_{\al,1}$

Given any $\al\in F$ and $a=(a_1,\dots,a_n)\in F^n$ such that $a_1=1$,
we consider the linear map $R_{\al,a}:\g\to\gl(n+2)$ defined as follows:
$$
x\mapsto A=\left(
    \begin{array}{c|c|c}
      \al & 0 & 0 \\\hline
      0 & J^n(\al-\la) &  0\\\hline
      0 & 0 & \al-2\la \\
    \end{array}
  \right),
$$
$$
v_k\mapsto (\ad_{\gl(n+2)} A-\la 1_{\gl(n+2)})^k
\left(
    \begin{array}{c|c|c}
      0 & a & 0 \\\hline
      0 & 0 & e_n\\\hline
      0 & 0 & 0 \\
    \end{array}
  \right), \quad 0\leq k\leq n-1,
$$
where $\{e_1,\dots,e_n\}$ is the canonical basis of the column space $F^n$.

\begin{lemma}\label{nod} $R_{\al,a}$ is a representation of $\g$ if and only if $n$ is odd and $a_{i}=0$ for all even $i$, in which case $R_{\al,a}$ is uniserial.
\end{lemma}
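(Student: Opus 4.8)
The plan is to decide which of the defining relations of $\g$ survive under $R=R_{\al,a}$. Since $\g$ is presented by the basis $x,v_0,\dots,v_{n-1}$ subject to $(\ref{rela})$ and $[v_i,v_j]=0$, the linear map $R$ is a representation exactly when it preserves these brackets. Write $M=R_{\al,a}(v_0)$ for the matrix defining the image of $v_0$ and $\theta=\ad_{\gl(n+2)}A-\la 1_{\gl(n+2)}$. By construction $R(v_k)=\theta^kM$, which realizes $(\ref{rela1})$, and the mixed brackets $[x,v_k]=\la v_k+v_{k+1}$ reduce to the identity $\ad_{\gl(n+2)}A\,(\theta^kM)=\theta^{k+1}M+\la\,\theta^kM$. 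The only relation not handled this way is the top one, $[x,v_{n-1}]=\la v_{n-1}$, which additionally requires $\theta^nM=0$. Thus everything comes down to two points: that $\theta^nM=0$ always holds, and exactly when the abelian relations $[v_i,v_j]=0$ are preserved.

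First I would write $A=D+S$, with $D=\mathrm{diag}(\al,\al-\la,\dots,\al-\la,\al-2\la)$ its diagonal part and $S$ the super-diagonal part coming from the middle Jordan block, indexing the rows and columns of $\gl(n+2)$ by $0,1,\dots,n+1$. A direct check on matrix units gives $(\ad D-\la)E^{0,j}=0$ for $1\le j\le n$ and $(\ad D-\la)E^{i,n+1}=0$ for $1\le i\le n$, so on the span $\M'$ of these $2n$ units one has $\theta=\ad S$. Moreover $\M'$ is $\ad S$-invariant and splits into the two nilpotent chains $E^{0,1}\mapsto -E^{0,2}\mapsto\cdots\mapsto(-1)^{n-1}E^{0,n}\mapsto 0$ and $E^{n,n+1}\mapsto E^{n-1,n+1}\mapsto\cdots\mapsto E^{1,n+1}\mapsto 0$, each of length $n$. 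Since $M\in\M'$, this yields $\theta^nM=0$ at once, together with the explicit formula $R(v_k)=\sum_{j}(-1)^k a_{j-k}\,E^{0,j}+E^{n-k,n+1}$ for $0\le k\le n-1$, where $a_m:=0$ for $m\notin\{1,\dots,n\}$.

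The heart of the argument is the abelian relations. A one-line matrix-unit computation shows that the bracket of any two elements of $\M'$ is a multiple of the single corner unit $E^{0,n+1}$; evaluating it on $R(v_i)$ and $R(v_j)$ gives $[R(v_i),R(v_j)]=\big((-1)^i-(-1)^j\big)\,a_{n-i-j}\,E^{0,n+1}$. Hence $R$ is a representation if and only if $a_{n-i-j}=0$ whenever $i,j\in\{0,\dots,n-1\}$ have opposite parity. As $i+j$ ranges over the odd integers in $\{1,\dots,2n-3\}$, the index $m=n-i-j$ ranges over precisely the $m\in\{1,\dots,n-1\}$ with $m\not\equiv n\pmod 2$, and every such $m$ occurs. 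If $n$ is even this forces $a_1=0$, impossible since $a_1=1$; if $n$ is odd it says exactly that $a_i=0$ for all even $i$. This is the asserted criterion, and the only delicate point is this parity-and-sign bookkeeping.

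It remains to prove uniseriality when the criterion holds. Denote by $u_0,\dots,u_{n+1}$ the standard basis of the module $U=F^{n+2}$. The formula for $R(v_k)$ shows $R(v_k)u_{n+1}=u_{n-k}$ and $R(v_0)u_1=a_1u_0=u_0$, while every $R(v_k)$ maps $U$ into $U_n:=\langle u_0,\dots,u_n\rangle$; therefore $V\cdot U=U_n$, a submodule of codimension one. Since each $R(v)$ is strictly upper triangular, $V$ acts nilpotently and hence annihilates every one-dimensional quotient; so every maximal submodule contains $V\cdot U=U_n$, which makes $U_n$ the unique maximal submodule. Finally, restricting $R$ to $U_n$ reproduces the representation $R_{\al,p,q,N}$ of Proposition \ref{conex} with $p=1$, $q=n$, $N=a$ (where $N_{1,1}=a_1\ne0$), which is uniserial. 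As every proper submodule of $U$ lies inside $U_n$, all submodules of $U$ form a chain, and $R$ is uniserial.
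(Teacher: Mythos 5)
Your proof is correct and follows essentially the same route as the paper: you derive the same explicit formula for $R(v_k)=\theta^k M$ (your matrix-unit computation is the paper's identity $\theta^k M=\bigl((-1)^k aN^k,\,N^k e_n\bigr)$ with $N=J^n(0)$), reduce the abelian relations to $\bigl((-1)^i-(-1)^j\bigr)a_{n-i-j}=0$, and carry out the same parity analysis using $a_1=1$. The only difference is that you spell out two points the paper treats as immediate — the verification that $\theta^nM=0$ and the uniseriality argument via the unique maximal submodule $V\cdot U=U_n$ together with Proposition \ref{conex} — both of which are correct.
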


\begin{proof} By definition, $R_{\al,a}$ preserves relations (\ref{rela1}) and (\ref{cuc}). We next determine
 when $R_{\al,a}$ preserves relations $[v_i,v_j]=0$. Letting $N=J^n(0)$, we have
$$
(\ad_{\gl(n+2)} A-\la 1_{\gl(n+2)})^k
\left(
    \begin{array}{c|c|c}
      0 & a & 0 \\\hline
      0 & 0 & e_n\\\hline
      0 & 0 & 0 \\
    \end{array}
  \right)=
  \left(
    \begin{array}{c|c|c}
      0 & (-1)^kaN^k & 0 \\\hline
      0 & 0 & N^ke_n\\\hline
      0 & 0 & 0 \\
    \end{array}
  \right), \quad 0\leq k\leq n-1.
$$
Thus $[R(v_k),R(v_j)]=0$ iff $(-1)^kaN^{k+j}e_n=(-1)^jaN^{k+j}e_n$ iff $a_{n-k-j}=0$ for $k+j$ odd.
Since $a_1=1$, the last condition is equivalent to $n$ odd and $a_{2s}=0$, for any $s$.
This proves the first assertion. As uniseriality is clear, the proof is complete.
\end{proof}

Finally, in the extreme case $n=1$, given any $\al\in F$ and $\ell\geq 2$ we have the faithful uniserial representation $T_{\al,\ell}:\g\to\gl(\ell)$
given by
$$
x\mapsto\mathrm{diag}(\al,\al-\la,\dots,\al-(\ell-1)\la),\; v_0\mapsto J^\ell(0).
$$

\begin{definition}\label{superdiagonalblocks} Given positive integers $\ell,d,d_1,\dots,d_\ell$ such that $d_1+\cdots+d_\ell=d$ and $\ell>1$,
and a matrix $A\in M_d$, we consider $A$ as partitioned into $\ell^2$ blocks $A(i,j)\in M_{d_i\times d_j}$.
We say that $A$ is block upper triangular if $A(i,j)=0$ for all $i>j$, and strictly block upper triangular if $A(i,j)=0$ for all $i\geq j$.
If $0\leq i\leq \ell-1$, by the $i$th block superdiagonal of $A$ we mean the $\ell-i$ blocks $A(1,1+i),A(2,2+i),\dots,A(\ell-i,\ell)$.
\end{definition}

\begin{lemma}\label{orbz2} Given positive integers $\ell,d_1,\dots,d_\ell$ ,with $\ell>1$, set $J_i=J^{d_i}(0)$, $1\leq i\leq \ell$, and
let $G$ be the subgroup of $\GL(d)$ consisting of all $X_1\oplus\cdots \oplus X_\ell$
such that $X_i\in U(F[J_i])$. Let $E\in M_d$ be strictly block upper triangular, with diagonal blocks of sizes
$d_1\times d_1,\dots,d_\ell\times d_\ell$. Let
$E_1\in M_{d_1\times d_2},\dots,E_{\ell-1}\in M_{d_{\ell-1}\times d_\ell}$ be the blocks in the first block superdiagonal of $E$, and suppose that the bottom left corner entry of each $E_i$ is
non-zero.

Then $E$ is $G$-conjugate to a matrix $H$ such that each of the blocks $H_1,\dots,H_{\ell-1}$ in the first block superdiagonal of $H$
has first column equal to the last canonical vector, and $H_{\ell-1}$ has last row equal to the first canonical vector.
Likewise, $E$ is also $G$-conjugate to a matrix $H$ such that each of the blocks $H_1,\dots,H_{\ell-1}$ in the first block superdiagonal of $H$ has last row equal to the first canonical vector, and the first column of $H_1$ is equal to the last canonical vector.
\end{lemma}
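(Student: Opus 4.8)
The plan is to exploit the block structure of the $G$-action. Writing $X=X_1\oplus\cdots\oplus X_\ell\in G$, conjugation sends the first superdiagonal block $E_i=E(i,i+1)\in M_{d_i\times d_{i+1}}$ to $X_iE_iX_{i+1}^{-1}$, so the $\ell-1$ relevant blocks can be normalized using the factors $X_i$. The argument rests on two elementary facts about an upper triangular Toeplitz matrix $X\in U(F[J^m(0)])$: its first column is $c_0(X)e_1$ and its last row is $c_0(X)e_m^{T}$, where $c_0(X)$ is the common diagonal entry. Consequently the first column of $X_iE_iX_{i+1}^{-1}$ equals $c_0(X_{i+1})^{-1}X_iu_i$, where $u_i$ is the first column of $E_i$, and the last row of $X_iE_iX_{i+1}^{-1}$ equals $c_0(X_i)\,w_i^{T}X_{i+1}^{-1}$, where $w_i^{T}$ is the last row of $E_i$. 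In particular the first column of the normalized block depends on $X_{i+1}$ only through the scalar $c_0(X_{i+1})$.

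The second ingredient is a cyclic-generator computation. Writing $J=J^m(0)$, so that $Je_k=e_{k-1}$, the vector $e_m$ generates $F^m$ over $F[J]$, and any $u$ with $u_m\neq 0$ satisfies $u=h(J)e_m$ for a unique $h\in U(F[J])$ with $h(0)=u_m$; hence $h(J)^{-1}u=e_m$, and $g(J)u=\mu e_m$ is solvable by a unit $g$ for each $\mu\neq 0$. Dually, if $w^{T}$ is a row vector with $w_1\neq 0$, the matrix $X\in U(F[J])$ whose first row is $w^{T}$ satisfies $w^{T}X^{-1}=e_1^{T}$. The role of the hypothesis is that the bottom left corner $\beta_i=(E_i)_{d_i,1}$ of $E_i$ is simultaneously the bottom entry of $u_i$ and the first entry of $w_i^{T}$; this single nonvanishing condition is exactly what makes both of the above solvable.

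For the first statement I would first set $X_\ell$ to be the upper triangular Toeplitz matrix whose first row is the last row $w_{\ell-1}^{T}$ of $E_{\ell-1}$; this is invertible because $\beta_{\ell-1}\neq 0$, it forces $w_{\ell-1}^{T}X_\ell^{-1}=e_1^{T}$, and it has $c_0(X_\ell)=\beta_{\ell-1}$. Then I would define $X_{\ell-1},\dots,X_1$ in decreasing order, choosing $X_i\in U(F[J_i])$ with $X_iu_i=c_0(X_{i+1})e_{d_i}$, which normalizes the first column of the $i$th block to $e_{d_i}$; this is possible by the cyclic-generator computation since $c_0(X_{i+1})$ is already fixed and nonzero. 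With $H=XEX^{-1}$, all first columns are then $e_{d_i}$ as required.

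The crux—and the one step that must be verified—is that the last row of $H_{\ell-1}$ comes out to be exactly $e_1^{T}$, not merely a nonzero multiple. Since $w_{\ell-1}^{T}X_\ell^{-1}=e_1^{T}$ is already arranged, that last row equals $c_0(X_{\ell-1})e_1^{T}$, so I must check $c_0(X_{\ell-1})=1$. But $X_{\ell-1}u_{\ell-1}=c_0(X_\ell)e_{d_{\ell-1}}=\beta_{\ell-1}e_{d_{\ell-1}}$ forces $c_0(X_{\ell-1})=\beta_{\ell-1}/h_{\ell-1}(0)=\beta_{\ell-1}/\beta_{\ell-1}=1$, because $h_{\ell-1}(0)$ is the bottom entry of $u_{\ell-1}$, which is again the corner $\beta_{\ell-1}$. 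Thus both conditions on $E_{\ell-1}$ are governed by the same corner value and are automatically compatible. The second statement follows from the mirror construction: fix $X_1$ by $X_1u_1=e_{d_1}$, then determine $X_2,\dots,X_\ell$ in increasing order by requiring the first row of $X_{i+1}$ to equal $c_0(X_i)w_i^{T}$, so each $H_i$ has last row $e_1^{T}$; the identical corner-value bookkeeping yields $c_0(X_2)=1$ and hence that the first column of $H_1$ is exactly $e_{d_1}$.
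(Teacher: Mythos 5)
Your proof is correct and rests on exactly the two facts the paper isolates at the start of its own argument --- that a unit of $F[J^m(0)]$ acts transitively on column (resp.\ row) vectors with nonzero last (resp.\ first) entry, and that it rescales first columns and last rows only by its constant diagonal entry --- so it is essentially the same approach. The only difference is organizational: the paper normalizes the first columns up to nonzero scalars, fixes the scalars by conjugating with scalar matrices, and adjusts $X_\ell$ last, whereas you fix $X_\ell$ first and verify via the corner-entry bookkeeping that the two normalizations imposed on $E_{\ell-1}$ are automatically compatible.
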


\begin{proof} Recall that $A\in F[J^p(0)]$ if and only if $A$ is upper triangular and all its superdiagonals have constant value. For instance, a typical element of $A\in F[J^4(0)]$ has the form
\begin{equation}
\label{formah}
A=\left(
  \begin{array}{cccc}
    \al & \be & \ga & \de \\
    0 & \al & \be & \ga \\
    0 & 0 & \al & \be \\
    0 & 0 & 0 & \al \\
  \end{array}
\right).
\end{equation}
It is clear that the unit group of $F[J^p(0)]$ acts transitively from the left (right) on the set of column (row) vectors of $F^p$ that have non-zero last (first) entry. Moreover, it is equally clear that if $U\in U(F[J^p(0)])$ and $B\in M_{q\times p}$ (resp. $B\in M_{p\times q}$) then the first column (resp. last row) of $BU$ (resp. $UB$) is that of $B$ scaled by a non-zero constant. It follows at once from these considerations that we can find
$X_1,\dots,X_{\ell-1}$ (resp. $X_2,\dots,X_{\ell}$) so that for any $X_\ell$ (resp. $X_1$) the resulting $X\in G$ will conjugate $E$ into a matrix $H$ such that the first column (resp. last row) of every $H_i$ is equal to a non-zero scalar multiple, say by $\al_i$, of the last (resp. first) canonical vector. Making a second selection of scalar matrices $Y_1,\dots,Y_\ell$ and conjugating $H$ by the resulting $Y=Y_1\oplus\cdots\oplus Y_\ell$, we can make all $\al_i=1$ above. Finally,
by suitably choosing $X_\ell$ (resp. $X_1$) with with 1's on the diagonal and taking all other $X_i=1_{d_i}$, we can make the last row (resp. first
column) of $H_{\ell-1}$ (resp. $H_1$) equal to the first (resp. last) canonical vector.
\end{proof}

\begin{prop}\label{nonas} Suppose $\la\neq 0$ and $n>1$. Then the representations $R_{\al,k,X}$, $R_{\al,n}$, $R_{\al,1}$ and $R_{\al,a}$ are non-isomorphic to each other.
\end{prop}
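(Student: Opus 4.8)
The plan is to distinguish the representations by examining the invariants of the matrices $A = R(x)$ and the associated module structure, since isomorphic representations must carry these invariants identically. First I would record the Jordan type of $A$ in each case. For $R_{\al,k,X}$ we have $A = J^k(\al)\oplus J^{n+1-k}(\al-\la)$, so $A$ acts on an $(n{+}1)$-dimensional space with two Jordan blocks, having eigenvalues $\al$ and $\al-\la$ (distinct since $\la\neq 0$). For $R_{\al,n}$ this becomes $J^n(\al)\oplus J^1(\al-\la)$, and for $R_{\al,1}$ it is $J^1(\al)\oplus J^n(\al-\la)$. For $R_{\al,a}$ the matrix $A = (\al)\oplus J^n(\al-\la)\oplus(\al-2\la)$ acts on an $(n{+}2)$-dimensional space and has three distinct eigenvalues $\al,\al-\la,\al-2\la$. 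An isomorphism of representations $\phi$ intertwines the respective matrices for $x$, hence conjugates one $A$ to the other; in particular the two matrices must be similar.

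The key steps are then as follows. First, since $R_{\al,a}$ is $(n{+}2)$-dimensional while the other three are $(n{+}1)$-dimensional, it cannot be isomorphic to any of them on dimension grounds. Among the three $(n{+}1)$-dimensional representations, similarity of $R(x)$ forces the Jordan type of $A$ to agree, which separates $R_{\al,n}$ (block sizes $n,1$) from $R_{\al,1}$ (block sizes $1,n$) from $R_{\al,k,X}$ with $1<k<n$ (block sizes $k,n{+}1{-}k$, both exceeding $1$): the multiset of block sizes $\{k,n{+}1{-}k\}$ is a genuine invariant. For fixed $k$ this also shows $R_{\al,k,X}$ and $R_{\al',k',X'}$ can only be isomorphic when $\{k,n{+}1{-}k\}=\{k',n{+}1{-}k'\}$; I would also track the eigenvalues $\al,\al-\la$ with their block sizes to pin down $\al$ and rule out the swap $k\leftrightarrow n{+}1{-}k$ when $k\neq n{+}1{-}k$, using that the block of size $k$ carries eigenvalue $\al$.

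The one genuinely delicate point is separating $R_{\al,k,X}$ from $R_{\al,1}$ and $R_{\al,n}$ in the borderline situation, and more importantly confirming that the eigenvalue data is correctly attached to block sizes rather than permuted; this is where the hypothesis $\la\neq 0$ is essential, since it guarantees $\al$ and $\al-\la$ are distinct eigenvalues and so the two Jordan blocks of $A$ are canonically labelled by their eigenvalues. I expect the main obstacle to be the bookkeeping for $R_{\al,k,X}$ versus $R_{\al',n}$ or $R_{\al',1}$: one must check that a $(k,n{+}1{-}k)$ block pattern with $1<k<n$ can never coincide with an $(n,1)$ or $(1,n)$ pattern, which is immediate, and that the eigenvalue placement is compatible. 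Once the Jordan type and eigenvalue-to-block assignment of $A=R(x)$ are shown to be isomorphism invariants, the four families fall into disjoint classes and the proposition follows; the finer distinction among different $X$ within a fixed $R_{\al,k,X}$ is not at issue here, as the statement only asserts the four families are mutually non-isomorphic.
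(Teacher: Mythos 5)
There is a genuine gap, and it lies exactly where you declare the issue closed. You write that ``the finer distinction among different $X$ within a fixed $R_{\al,k,X}$ is not at issue here,'' but that is a misreading of the statement. The proposition is quantified over the parameters: $R_{\al,k,X}$ is a family indexed by $\al$, $k$ and $X\in M_{k-1,n-k}$, and $R_{\al,a}$ a family indexed by $\al$ and $a$, and ``non-isomorphic to each other'' means pairwise non-isomorphic across \emph{all} of these, including $R_{\al,k,X}$ versus $R_{\al,k,Y}$ with $X\neq Y$ and $R_{\al,a}$ versus $R_{\al,b}$ with $a\neq b$. This is how the paper uses the result: the Final Step of Theorem \ref{main1} invokes Proposition \ref{nonas} precisely to conclude that $R$ is isomorphic to \emph{one and only one} of the listed representations, and part (2) of Theorem \ref{main2} cites ``the proof of Proposition \ref{nonas}'' for the fact that the joint centralizer of $R(x)$ and $R(v_0)$ is scalar. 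Your argument via dimension, Jordan type and the eigenvalue-to-block assignment of $R(x)$ correctly reproduces the paper's first (easy) reduction --- it shows the only possible isomorphisms are within a fixed $R_{\al,k,\,\cdot\,}$ or within $R_{\al,\,\cdot\,}$ --- but it cannot see $X$ or $a$ at all, since $R(x)$ does not depend on them.

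The missing step is the actual computation. An intertwiner $T$ between $R_{\al,k,X}$ and $R_{\al,k,Y}$ commutes with $J^k(\al)\oplus J^{n+1-k}(\al-\la)$, hence (as $\la\neq 0$) has the form $T_1\oplus T_2$ with $T_1$ a polynomial in $J^k(0)$ and $T_2$ a polynomial in $J^{n+1-k}(0)$, each with non-zero constant term; feeding this into $T\,R_{\al,k,X}(v_0)=R_{\al,k,Y}(v_0)\,T$, with $R_{\al,k,X}(v_0)$ built from $N=\left(\begin{smallmatrix}0 & X\\ 1 & 0\end{smallmatrix}\right)$, forces $T_1$ and $T_2$ to be the same scalar and hence $X=Y$. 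The analogous computation with $S=J^1(\be)\oplus S_2\oplus J^1(\ga)$ gives $a=b$ for the family $R_{\al,a}$. Without this part your proposal proves a strictly weaker statement than the one needed.
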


\begin{proof} Considering the eigenvalues of the image of $x$ as well as their multiplicities, the only possible isomorphisms are easily seen to be between $R_{\al,k,X}$ and $R_{\al,k,Y}$, or $R_{\al,a}$ and $R_{\al,b}$.

Suppose first $T\in\GL(n+1)$ satisfies
$$
T R_{\al,k,X}(y) T^{-1}=R_{\al,k,Y}(y),\quad y\in\g.
$$
Then $T$ commutes with $R_{\al,k,X}(x)=J^k(\al)\oplus J^{n+1-k}(\al-\la)$, and therefore $T=T_1\oplus T_2$,
where $T_1$ (resp. $T_2$) is a polynomial in $J^k(0)$ (resp. $J^{n+1-k}(0)$) with non-zero constant term. Thus
$$
T R_{\al,k,X}(v_0) T^{-1}=R_{\al,k,Y}(v_0)
$$
translates into
\begin{equation}
\label{ali}
T_1 \left(
    \begin{array}{c|c}
      0 & X \\\hline
      1 & 0 \\
    \end{array}
  \right)=\left(
    \begin{array}{c|c}
      0 & Y \\\hline
      1 & 0 \\
    \end{array}
  \right)T_2
\end{equation}
Explicitly writing $T_1$ and $T_2$, as in (\ref{formah}), we infer from (\ref{ali}) that $T_1=\al 1_{n+1}=T_2$, whence $X=Y$.

Suppose next $S\in\GL(n+2)$ satisfies
$$
S R_{\al,a}(y) S^{-1}=R_{\al,b}(y),\quad y\in\g.
$$
As above, $S=J^1(\be)\oplus S_2\oplus J^1(\ga)$, where $S_2$ is a polynomial in $J^n(0)$ with non-zero constant term and $\be,\ga$ are non-zero, but then
$$
S R_{\al,a}(v_0)=R_{\al,b}(v_0)S
$$
forces $S$ to be a non-zero scalar matrix, whence $a=b$.

\end{proof}

\section{Classification of uniserial representations}\label{sec2}

\begin{lemma}\label{rad} Let $T:\h\to\k$ be a homomorphism of Lie algebras. Then
$$
T((\ad_\h y -\mu 1_\h)^k z)=(\ad_\k T(y) -\mu 1_\k)^k T(z),\quad y,z\in\h,\mu\in F,k\geq 0.
$$
\end{lemma}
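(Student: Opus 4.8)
The plan is to reduce the identity to the single claim that $T$ intertwines the two shifted adjoint operators, and then to iterate by induction on $k$. Set $D=\ad_\h y-\mu 1_\h$ and $D'=\ad_\k T(y)-\mu 1_\k$; these are linear endomorphisms of $\h$ and $\k$ respectively, and the assertion to be proved is precisely the operator identity $T\circ D^k=D'^k\circ T$ as linear maps $\h\to\k$, read off by evaluating at $z$.

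First I would treat the case $k=1$. Since $T$ is a homomorphism of Lie algebras, $T([y,z])=[T(y),T(z)]$, that is, $T(\ad_\h y\,z)=\ad_\k T(y)\,T(z)$ for every $z\in\h$. Because $T$ is also $F$-linear, it commutes with the scalar shift, so subtracting $\mu T(z)$ from both sides gives $T(Dz)=D'(Tz)$ for all $z$, which is exactly the intertwining relation $T\circ D=D'\circ T$.

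The remaining step is a routine induction on $k$. The base case $k=0$ is the tautology $T(z)=T(z)$. Assuming $T\circ D^k=D'^k\circ T$, I would compute $T\circ D^{k+1}=(T\circ D)\circ D^k=(D'\circ T)\circ D^k=D'\circ(T\circ D^k)=D'\circ(D'^k\circ T)=D'^{k+1}\circ T$, where the second equality uses the $k=1$ case just established and the fourth uses the inductive hypothesis. Evaluating the resulting endomorphism identity at $z$ yields the claim for all $k\geq 0$.

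I do not expect any genuine obstacle here. The entire content of the lemma is the intertwining relation $T\circ\ad_\h y=\ad_\k T(y)\circ T$, which is merely a restatement of the defining property of a Lie algebra homomorphism; the passage from $\ad_\h y$ to the shifted operator $\ad_\h y-\mu 1_\h$ is free because $T$ is linear and hence commutes with the fixed scalar $\mu 1$, and the jump from $k=1$ to general $k$ is the formal iteration above.
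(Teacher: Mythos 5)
Your proof is correct and matches the paper's approach exactly: the paper simply states that the lemma ``follows easily by induction,'' and your argument supplies precisely that induction, with the base case being the intertwining relation $T\circ(\ad_\h y-\mu 1_\h)=(\ad_\k T(y)-\mu 1_\k)\circ T$ derived from the homomorphism property and linearity of $T$.
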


\begin{proof} This follows easily by induction.
\end{proof}

\begin{theorem}\label{main1} Consider the Lie algebra $\g=\langle x\rangle\ltimes V$, where $V$ is an abelian Lie algebra and $x\in\GL(V)$
acts on $V$ via a single Jordan block $J_n(\la)$, $\la\neq 0$. Let $R$ be a faithful uniserial representation of $\g$. Then

(a) If $n>1$ then $R$ is isomorphic to one and only one of the representations
$R_{\al,k,X}$, $R_{\al,n}$, $R_{\al,1}$, $R_{\al,a}$.

(b) If $n=1$ then $R$ is isomorphic to one and only one of the representations $T_{\al,\ell}$.
\end{theorem}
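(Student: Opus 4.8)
The plan is to extract, from an arbitrary faithful uniserial $R:\g\to\gl(U)$ with $\dim U=d$, enough rigidity to recognize $R$ among the constructed representations, and then to invoke Proposition \ref{nonas} for the ``one and only one'' clause. First I would record the constraints coming from solvability. Since $\g$ is solvable and $F$ is algebraically closed, every composition factor of $U$ is one-dimensional, and because $[\g,\g]=V$ (here $\la\neq 0$ makes $x$ invertible on $V$) each factor is a character vanishing on $V$. Hence in a basis adapted to the composition series $R(x)$ is upper triangular while $R(V)$ is strictly upper triangular, so each $R(v)$ is nilpotent. Lemma \ref{rad} gives $R(v_k)=(\ad_{\gl(U)}R(x)-\la 1)^kR(v_0)$, and faithfulness forces $R(v_0),\dots,R(v_{n-1})$ to be independent, so the minimal polynomial of $R(v_0)$ under $\theta:=\ad_{\gl(U)}R(x)-\la 1$ is exactly $t^n$. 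Writing $R(x)=S+N$ for its Jordan decomposition, $\ad S+\ad N$ is the Jordan decomposition of $\ad R(x)$; as $R(v_0)$ lies in the generalized $\la$-eigenspace of $\ad R(x)$ it lies in the genuine $\la$-eigenspace of $\ad S$, i.e. $[S,R(v_0)]=\la R(v_0)$ and $\theta$ acts as $\ad N$ there. Thus $R(v_0)$, and with it all of $R(V)$, raises $S$-eigenvalues by $\la$.

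Next I would pin down the eigenvalue pattern. Let $W_\mu$ be the $\mu$-eigenspace of $S$. Grouping the $W_\mu$ by cosets of $\la\Z$ yields a $\g$-module decomposition (each block is $S$-, $N$- and $R(V)$-stable), so indecomposability confines the eigenvalues to a single coset, and the same ``gap'' argument makes them consecutive, say $\al,\al-\la,\dots,\al-m\la$, with eigenspaces $W_0,\dots,W_m$ all nonzero and $R(v_0):W_i\to W_{i-1}$. Each $U^{(j)}:=W_0\oplus\cdots\oplus W_j$ is a submodule, so the subquotient $W_j$ is uniserial with $V$ acting trivially; hence $N|_{W_j}$ is a single Jordan block and $R(x)|_{W_j}=J^{d_j}(\al-j\la)$, $d_j=\dim W_j$. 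Finally, since $U/VU$ is a uniserial $F[x]$-module it has a single eigenvalue, which forces $VU=U^{(m-1)}$ and, inductively, $V^jU=U^{(m-j)}$; equivalently $R(V):W_{i+1}\to W_i$ is surjective for every $i$.

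In the two-eigenvalue case $m=1$ this already finishes: $R(x)=J^{k}(\al)\oplus J^{n+1-k}(\al-\la)$ with $k=d_0$, and $R(V)\subseteq\M_{k,n+1-k}$, so $R=R_{\al,k,n+1-k,N}$ in the notation of Proposition \ref{conex}, where $N$ is the off-diagonal block of $R(v_0)$. By Proposition \ref{sl2} the minimal polynomial $t^n$ of $\widehat N$ forces $d=n+1$ and $N_{k,1}\neq0$; conjugating by the units of the commutant of $R(x)$ and applying the normal form of Lemma \ref{orbz2} brings $N$ to $\left(\begin{smallmatrix}0&X\\ 1&0\end{smallmatrix}\right)$, identifying $R$ with a unique $R_{\al,k,X}$ (or with $R_{\al,n}$, $R_{\al,1}$ at the extremes $k=n,1$). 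The degenerate case $n=1$ is easier: there $x$ acts on $V$ as the scalar $\la$, $\theta R(v_0)=0$ gives $[N,R(v_0)]=0$, and uniseriality forces $N=0$ with all $d_i=1$ and $R(v_0)$ regular nilpotent, i.e. $R\cong T_{\al,\ell}$; this is also the diagonalizable case treated in \cite{CS}.

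The hard part is $m\ge2$, where the relations $[R(v_i),R(v_j)]=0$ become nontrivial: the composites $R(v_i)R(v_j):W_l\to W_{l-2}$ must equal $R(v_j)R(v_i)$. I expect to analyze these using the Clebsch--Gordan description of Proposition \ref{sl2} on each consecutive pair of levels (which governs the cyclic structure of $R(v_0)$ across $W_{i+1}\to W_i$) together with the surjectivity established above, and to show that the commutativity constraints cannot be satisfied once a genuine third interior level is present; this should force $m=2$ with multiplicities $(d_0,d_1,d_2)=(1,n,1)$, exactly as in Lemma \ref{nod}, and pin the normal form to a unique $R_{\al,a}$ with $n$ odd and $a_i=0$ for even $i$. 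Once existence is secured in all cases, the ``one and only one'' assertion is precisely Proposition \ref{nonas}. The principal obstacle, and the step I expect to demand the most care, is thus the rigidity analysis for $m\ge2$: bounding the number of eigenvalue levels and extracting the forced $(1,n,1)$ shape.
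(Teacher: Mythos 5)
Your outline follows essentially the same route as the paper's proof: triangularize via Lie's theorem, show that the semisimple part of $R(x)$ has consecutive eigenvalues $\al,\al-\la,\dots,\al-m\la$ whose eigenspaces are shifted by $R(V)$, that each level carries a single Jordan block of the nilpotent part, and then reduce to either two levels (the $R_{\al,k,X}$, $R_{\al,n}$, $R_{\al,1}$ family) or a degenerate three-level shape $(1,n,1)$ (the $R_{\al,a}$ family), with uniqueness from Proposition \ref{nonas}. Your handling of the $n=1$ and two-eigenvalue cases is essentially correct, modulo one small imprecision: the minimal polynomial of $R(v_0)$ being $t^n$ only gives $d_0+d_1-1\ge n$ by Proposition \ref{sl2}; to force equality you also need the corner entry $N_{k,1}\neq 0$, which comes from uniseriality at the matrix position $(d_1,d_1+1)$ (where $A$ vanishes because the diagonal entries differ), not from the minimal polynomial alone. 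You have the ingredients for this, but the inference as written is reversed.

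The genuine gap is the case $m\ge 2$, which you explicitly defer (``I expect to analyze\dots and to show''). This is not a routine verification; it is the mathematical core of the theorem and occupies Steps 9 and 10 of the paper's proof. Concretely, one must show that $A$ cannot have three consecutive diagonal Jordan blocks $J^a(\be),J^b(\be-\la),J^c(\be-2\la)$ with $a>1$ (and, by duality, with $c>1$). The paper does this by normalizing $E_0$ via Lemma \ref{orbz2}, computing $S_k=(L-R)^kS$ explicitly (where $L$, $R$ are left and right multiplication by the relevant nilpotent Jordan blocks), and extracting from the commutativity relations $[R(v_k),R(v_j)]=0$ the two conditions $(-1)^{b-1}=1$ and $(-1)^b(b-1)=1$, which are incompatible. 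Note also that your proposed tool for this step --- the Clebsch--Gordan analysis of Proposition \ref{sl2} --- only governs a single pair of adjacent levels; the obstruction that kills configurations with three or more levels lives in block $(1,3)$ of the commutators, i.e.\ in the interaction across non-adjacent levels, and is not visible from Proposition \ref{sl2}. Until this rigidity computation is carried out, the claim that $m\ge 2$ forces the shape $(1,n,1)$ (and hence the completeness of the list in part (a)) is unproven.
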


\begin{proof} Let $U$ be a faithful uniserial $\g$-module, say of dimension $d$. Lie's theorem ensures the existence of a basis $\B=\{u_1,\dots,u_d\}$ of $U$ such that the corresponding matrix representation $R:\g\to \gl(d)$ consists of upper triangular matrices.

Since $x\in\GL(V)$, we have $[\g,\g]=V$, whence $R(v)$ is strictly upper triangular for every $v\in V$. Set
$$
A=R(x)\text{ and }E_k=R(v_k),\; 0\leq k\leq n-1.
$$
In view of \cite[Lemma 2.2]{CS}, we may assume that $A$ satisfies:
\begin{equation}\label{aij}
A_{ij}=0\text{ whenever } A_{ii}\neq A_{jj}.
\end{equation}
Moreover, from \cite[Lemma 2.1]{CS}, we know that for every $1\leq i<d$ there is some $y_i\in\g$ such that
\begin{equation}\label{yi}
R(y_i)_{i,i+1}\neq 0.
\end{equation}

\bigskip

\noindent{\sc Step 1.} If $A_{i,i}\neq A_{i+1,i+1}$ then $(E_0)_{i,i+1}\neq 0$ and $A_{i,i}-A_{i+1,i+1}=\lambda$.

\bigskip

Indeed, (\ref{rela1}), (\ref{cuc}) and Lemma \ref{rad} imply
\begin{equation}\label{ew}
(\ad_{\gl(d)} A -\lambda 1_{\gl(d)})^k E_0=E_k,\quad 0\leq k\leq n-1,
\end{equation}
and
\begin{equation}\label{e0k2}
(\ad_{\gl(d)} A -\lambda 1_{\gl(d)})^n E_0=0.
\end{equation}
Since $A$ is upper triangular and $E_0$ is strictly upper triangular, (\ref{ew}) and (\ref{e0k2}) give
\begin{equation}\label{e0k3}
(A_{i,i}-A_{i+1,i+1}-\lambda)^k (E_0)_{i,i+1}=(E_k)_{i,i+1},\quad 0\leq k\leq n-1, 1\leq i<d.
\end{equation}
and
\begin{equation}\label{e0k4}
(A_{i,i}-A_{i+1,i+1}-\lambda)^n (E_0)_{i,i+1}=0,\quad 1\leq i<d.
\end{equation}
Fix $i$ such that $1\leq i<d$ and $A_{i,i}\neq A_{i+1,i+1}$. By (\ref{aij}), we have
\begin{equation}\label{cac}
A_{i,i+1}=0.
\end{equation}
Combining (\ref{yi}), (\ref{e0k3}) and (\ref{cac}) we obtain
\begin{equation}\label{cac2}
(E_0)_{i,i+1}\neq 0.
\end{equation}
From (\ref{e0k4}) and (\ref{cac2}) we deduce
$$
A_{i,i}-A_{i+1,i+1}=\lambda.
$$

\smallskip

\noindent{\sc Step 2.}  We have

\begin{equation}\label{desct}
A=A_1\oplus\cdots\oplus A_\ell,\quad A_i\in\gl(d_i),
\end{equation}
where each $A_i$ has scalar diagonal, say of scalar $\al_i$, and, setting $\al=\al_1$, we have
$$
\al_i=\al-(i-1)\la.
$$

\medskip

This follows at once from (\ref{aij}) and Step 1.

\bigskip

\noindent{\sc Step 3.} Let us write each $E_k$ in block form compatible with (\ref{desct}), that is, with diagonal blocks
of sizes $d_1\times d_1,\dots,d_\ell\times d_\ell$. Then all diagonal blocks of every $E_k$ are equal to 0.

\bigskip

Indeed, suppose $i\leq j$ and
$$
A_{i,i}=\cdots=A_{j,j}.
$$
Setting $U^r=\mathrm{span}\{u_1,\dots,u_{r}\}$, we see that the section
$$
U^{i,j}=U^j/U^{i-1}
$$
of $U$ is a (uniserial) $\g$-module of dimension $e=j-i+1$. Let $T:\g\to\gl(e)$ be the corresponding matrix representation
relative to the basis $u_i+U^{i-1},\dots,u_j+U^{i-1}$ of $U$. Then $T(x)$ is upper triangular with scalar diagonal,
so $\ad_{\gl(e)} T(x)$ is nilpotent. On the other hand, since $T$ is a Lie homomorphism, Lemma \ref{rad} gives
$$
(\ad_{\gl(e)} T(x)-\la 1_{\gl(e)})^n T(v_k)=0,\quad 0\leq k\leq n-1.
$$
It follows that every $T(v_k)$ is a generalized eigenvector of $\ad_{\gl(e)}T(x)$ for the distinct eigenvalues $\la$ and 0.
We infer that every $T(v_k)=0$. It follows that all diagonal blocks of every $E_k$ are equal to 0.

\bigskip

\noindent{\sc Step 4.} Referring to the block decomposition of $E_k$ used in Step 3,
if $i<j$ and $j\neq i+1$, then block $(i,j)$ of $E_k$ is 0 for all $0\leq k\leq n-1$.

\bigskip

Indeed, recalling Definition \ref{superdiagonalblocks}, we let
$$
S(1),S(2),\dots,S(\ell-1)
$$
be the subspaces of $\gl(d)$ corresponding to the block superdiagonals $1,2,\dots,\ell-1$, and
set
$$
S=S(1)\oplus\cdots\oplus S(\ell-1).
$$
Then $S(i)$ is the generalized eigenspace of $\ad_{\gl(d)}\, A$ acting on $S$ for the eigenvalue $i\la$, for all $1\leq i\leq \ell-1$.
On the other hand, every $E_k\in S$ by Step 3, while (\ref{ew}) and (\ref{e0k2})  imply that every $E_k$
belongs to the generalized eigenspace of $\ad_{\gl(d)}\, A$ for the eigenvalue~$\la$. We conclude that every $E_k$ is in $S(1)$.

\bigskip

\noindent{\sc Step 5.} We may assume without loss of generality that $A$ is in Jordan form
\begin{equation}
\label{jose}
J^{d_1}(\al)\oplus J^{d_2}(\al-\lambda)\oplus\cdots\oplus J^{d_\ell}(\al-(\ell-1)\lambda).
\end{equation}

\bigskip

Indeed, by (\ref{yi}) and Step 3, the first superdiagonal of every $A_i$ appearing in (\ref{desct}) consists entirely
of non-zero entries. Thus, for each $1\leq i\leq \ell$ there is $X_i\in\GL(d_i)$ such that
$$
X_i A_i X_i^{-1}=J^{d_i}(\al-(i-1)\la).
$$
Set
$$
X=X_1\oplus\cdots \oplus X_\ell\in\GL(d).
$$
Then $XAX^{-1}$ is equal to (\ref{jose}) and $XE_kX^{-1}$ is strictly block upper triangular with each block $(i,j)$, $j\neq i+1$, equal to 0.

\bigskip

\noindent{\sc Step 6.} $A$ has at least 2 Jordan blocks.

\bigskip

If not, $V$ is annihilated by $R$, by Steps 3 and 4, contradicting the fact that $R$ is faithful.

\bigskip

\noindent{\sc Step 7.} $d_i+d_{i+1}\leq n+1$ for all $1\leq i<\ell$.

\bigskip

Apply Proposition \ref{conex} to suitable sections of $U$.

\bigskip

\noindent{\sc Step 8.} Without loss of generality we may assume that the first column of each block along
the first block superdiagonal of $E_0$ is equal to the last canonical vector, and that the last row of the last of these
blocks is equal to the first canonical vector.

\bigskip

This follows from Lemma \ref{orbz2}.

\bigskip

\noindent{\sc Final Step when $n=1$. } Suppose $n=1$. Then all $d_i=1$ by Step 7, so
Steps 5, 6 and~8 yield that $R$ is isomorphic to a representation $T_{\al,\ell}$.
As these representations are clearly non-isomorphic to each other, the Theorem is proven in this case.

\bigskip

We assume for the remainder of the proof that $n>1$.

\bigskip

\noindent{\sc Step 9.} $A$ has at least one Jordan block of size $>1$.

\bigskip

If not, $d_i+d_{i+1}<n+1$ for all $i$ by Step 7. Since the $x$-invariant subspaces
of $V$ form a chain, it follows from Proposition \ref{conex} that $R(v_{n-1})=0$, contradicting the faithfulness of $R$.

\bigskip

\noindent{\sc Step 10.} Let $J^{d_i}(\al-(i-1)\la)$ be a Jordan block of $A$ of size $>1$ of $A$, as ensured
by Step~9. Then $i\geq \ell-1$ and $i\leq 2$.

\bigskip

Let us first see that $i\geq \ell-1$. Suppose, if possible, that $A$ has consecutive
Jordan blocks $J^a(\be),J^b(\be-\la),J^c(\be-2\la)$ with $a>1$.

\medskip

\noindent{Case 1.} $b=1$. Concentrating on a suitable section of $U$, as in the proof of Step 3, we see that $\g$ has a  matrix
representation $P:\g\to\gl(4)$ such that
$$
P(x)=\left(
         \begin{array}{cccc}
           \be & 1 & 0 & 0 \\
           0 & \be & 0 & 0 \\
           0 & 0 & \be-\la & 0 \\
           0 & 0 & 0 & \be-2\la \\
         \end{array}
       \right),\; P(v_0)=\left(
         \begin{array}{cccc}
           0 & 0 & 0 & 0 \\
           0 & 0 & 1 & 0 \\
           0 & 0 & 0 & 1 \\
           0 & 0 & 0 & 0 \\
         \end{array}
       \right).
$$
Here the shape of $P(E_0)$ is ensured by Steps 3, 4 and 8. Now
$$
P(v_1)=[P(x),P(v_0)]-\la P(v_0)=E^{1,3},
$$
which does not commute with $P(v_0)$, a contradiction.

\medskip

\noindent{Case 2.} $b>1$. Again, looking at a suitable section of $U$, we find a matrix representation
$Q:\g\to\gl(b+3)$
such that

\begin{equation}\label{comp1}
Q(x)=\left(
    \begin{array}{c|c|c}
      J^2(\be) & 0 & 0 \\\hline
      0 & J^b(\be-\la) &  0\\\hline
      0 & 0 & \be-2\la \\
    \end{array}
  \right),
\end{equation}
and
\begin{equation}\label{comp2}
Q(v_0)=\left(
    \begin{array}{c|c|c}
      0 & S & 0 \\\hline
      0 & 0 & u \\\hline
      0 & 0 & 0 \\
    \end{array}
  \right),
\end{equation}
where
\begin{equation}\label{comp3}
S=\left(
         \begin{array}{cccc}
           0 & * & \dots & * \\
           1 & * & \dots & * \\
         \end{array}
       \right)\in M_{2\times b},\; u=e_b\in F^b,
\end{equation}
and $\{e_1,\dots,e_b\}$ is the canonical basis of the column space $F^b$. By Lemma \ref{rad}, we have
\begin{equation}\label{comp6}
Q(v_k)=(\ad_{\gl(d)} Q(x)-\la 1_{\gl(d)})^k Q(v_0),\quad 0\le k\le b-1.
\end{equation}
Let $N=J^2(0)$, $M=J^b(0)$, $L$ left multiplication by $N$ and $R$ right multiplication by $M$.
Direct computation, using (\ref{comp1})-(\ref{comp3}), reveals that
\begin{equation}\label{comp4}
Q(v_k)=
\left(
    \begin{array}{c|c|c}
      0 & S_k & 0 \\\hline
      0 & 0 & e_{b-k} \\\hline
      0 & 0 & 0 \\
    \end{array}
  \right),
\end{equation}
where
\begin{equation}\label{comp5}
S_k=(L-R)^kS\in M_{2\times b}.
\end{equation}
Then $[Q(v_k),Q(v_j)]=0$ is equivalent to
\begin{equation}\label{comp5b}
S_ke_{b-j}=S_je_{b-k}.
\end{equation}
Since $L^2=0$, $S_k=(-1)^kSM^k+(-1)^{k-1}kNSM^{k-1}$, so
$$
S_k=\left(
         \begin{array}{cccccc}
           0 \dots 0 & (-1)^{k-1}k &      * & * & \dots & * \\
           0 \dots 0 &           0 & (-1)^k & * & \dots & * \\
         \end{array}
       \right),
$$
where the first non-zero column occurs in position $k$.
Taking first $(k,b)=(b-1,0)$ and then $(k,b)=(b-1,1)$ in (\ref{comp5b}), we respectively get
\begin{equation}\label{comp5c}
(-1)^{b-1}=1,\quad (-1)^b(b-1)=1,
\end{equation}
which is impossible. This proves that $i\geq \ell-1$.

The proof that $i\leq 2$ is entirely analogous. Alternatively, it can be obtained from above by duality. Indeed, the dual module $U^*$ is also
faithful and uniserial. The corresponding matrix representation, say $K:\g\to\gl(d)$, relative to the dual basis $\{u_1^*,\dots,u_d^*\}$, is given by
$$
K(y)=-R(y)',\quad y\in\g,
$$
the opposite transpose of $R(y)$. Conjugating each $K(y)$ by the block permutation matrix corresponding to the permutation
\begin{equation}
\label{ky}
1\leftrightarrow\ell, 2\leftrightarrow (\ell-1),\dots
\end{equation}
and further conjugating the resulting representation by a suitable block diagonal matrix,
we obtain a matrix representation $L:\g\to\gl(d)$, where the sizes of the Jordan blocks of $L(x)$ are those of $A$ in reversed order
according to (\ref{ky}). Thus, $i\leq 2$ follows from $i\geq \ell-1$.

\bigskip

\noindent{\sc Final Step when $n>1$.} $R$ is isomorphic to one and only one of the representations
$R_{\al,k,X}$, $R_{\al,n}$, $R_{\al,1}$ and $R_{\al,a}$.

\bigskip

Indeed, it follows at once from Step 10 that $\ell=2$, or $\ell=3$ and $d_1=d_3=1$.

Suppose first $\ell=2$. Then Proposition \ref{conex} ensures $d_1+d_2=n+1$. It now follows easily from Steps 3, 5 and 8 that $R$ is isomorphic to
$R_{\al,n}$ if $d_1=n$, to $R_{\al,1}$ if $d_1=1$, and to $R_{\al,d_1,X}$ if $1<d_1<n$, where $X$ is obtained by eliminating the first column
and last row of $E_0$,

Suppose next $\ell=3$ and set $e=d_2$. We have
 $$
A=\left(
    \begin{array}{c|c|c}
      \al & 0 & 0 \\\hline
      0 & J^{e}(\al-\la) & 0 \\\hline
      0 & 0 & \al-2\la \\
    \end{array}
  \right),\; E_0=\left(
    \begin{array}{c|c|c}
      0 & a & 0 \\\hline
      0 & 0 & b \\\hline
      0 & 0 & 0 \\
    \end{array}
  \right),
$$
where
$$a=(1,a_2,\dots,a_{e})\in F^{e},\;
b= \left(\begin{array}{c}
0 \\
\vdots \\
0 \\
1 \\
\end{array}
\right)\in F^{e}.
$$
By Lemma \ref{nod}, we see that $e$ is odd and $a_i=0$ for all even $i$. As $(\ad_{\gl(d)} A-\la 1_{\gl(d)})^{n-1} E_0\neq 0$ and $(\ad_{\gl(d)} A-\la 1_{\gl(d)})^{n} E_0=0$, we must have $e=n$, whence $R$ is isomorphic to $R_{\al,a}$.

Whether $\ell=2$ or $\ell=3$, uniqueness follows from Proposition \ref{nonas}.
\end{proof}

\section{The general case}\label{sec3}

Throughout this section $\g=\langle x\rangle\ltimes V$, where $x\in\GL(V)$. We wish to classify all uniserial $\g$-modules. As explained in the Introduction, we may restrict to analysing faithful modules in the case when $x$ is not diagonalizable.
Let
$$
V=V_1\oplus\cdots\oplus V_e,
$$
be a decomposition of $V$ into non-zero indecomposable $F[x]$-modules of dimensions
$$
n=n_1\geq n_2\geq \cdots\geq n_e\geq 1.
$$
Since $x$ is not diagonalizable, $n>1$. By Theorem \ref{main1}, we may restrict to the case $e>1$. For each
$1\leq i\leq e$, consider the subalgebra
$\g_i=\langle x_i\rangle \ltimes V_i$ of $\g$, where $x_i=x|_{V_i}$.

In what follows, a generator of any $V_i$ as $F[x]$-module will be simply referred to as a generator.

Given positive integers $p,q$, we write $\M_{p,q}$ for subspace of $\gl(p+q)$ consisting of all matrices
$$
\widehat{N}=\left(
           \begin{array}{cc}
             0 & N\\
             0 & 0\\
             \end{array}
         \right),\quad N\in M_{p\times q}.
$$
\begin{theorem}\label{main2} Assume that $x$ has $e$ Jordan blocks, where $e>1$, and is not diagonalizable.

\medskip

(1) Suppose that $\g$ has a faithful uniserial representation $R:\g\to\gl(d)$. Then the restriction $R_1:\g_1\to\gl(d)$ is already uniserial.
In particular, $$d=n+1\text{ or }d=n+2.$$
In the first case, $R_1$ is isomorphic to a unique $R_{\al,k,X}$, the automorphism
$x$ has a single eigenvalue $\la$, and $x$ has Jordan decomposition
\begin{equation}
\label{unosolo}
J_{n}(\lambda)\oplus J_{n_2}(\lambda)\oplus\cdots\oplus J_{n_e}(\lambda),
\end{equation}
where
\begin{equation}
\label{ere}
n_2\leq n-2,\; n_3\leq n-4,\; n_4\leq n-6,\dots,\; n_e\leq n-2(e-1),
\end{equation}
and
\begin{equation}
\label{ere2}
e\leq \mathrm{min}\{k,n+1-k\}.
\end{equation}
In the second case, $R_1$ is isomorphic to a unique $R_{\al,a}$, $n$ is odd, the automorphism $x$ has two eigenvalues $\la$ and $2\la$, and $x$ has Jordan decomposition
\begin{equation}
\label{dossolos}
J_{n}(\lambda)\oplus J_{1}(2\lambda),
\end{equation}
so that $e=2$ and $n_2=1$.

(2) Suppose, conversely, that $x$ has a single eigenvalue, $1<k<n$, and (\ref{ere})-(\ref{ere2}) are satisfied. Then $R_{\al,k,X}$ can be extended
to a faithful uniserial representation of $\g$. Let $v_0,\dots,v_{n-1}$ be a basis of $V_1$ relative to which the matrix of $x_1$ is $J_n(\la)$,
and set
$$A=R_{\al,k,X}(x)=J^k(\al)\oplus J^{n+1-k}(\al-\la)\in\gl(n+1),\;
E_0=R_{\al,k,X}(v_0)\in M_{k\times n+1-k}.$$
Then the extensions of $R_{\al,k,X}$
to a faithful uniserial representation $\g\to\gl(n+1)$ are given by all possible $F[t]$-monomorphisms $V\to\M_{k,n+1-k}$ such that $v_0\to E_0$, where $t$ acts via $\ad_\g x-\la 1_\g$ on $V$ and via $\ad_{\gl(n+1)}A -\la 1_{\gl(n+1)}$ on $\M_{k,n+1-k}$. Abstractly, these extensions are
given by all possible $F[t]$-monomorphisms
$$
F[t]/(t^n)\oplus F[t]/(t^{n_2})\oplus\cdots\oplus F[t]/(t^{n_e})\to
F[t]/(t^n)\oplus F[t]/(t^{n-2})\oplus\cdots\oplus F[t]/(t^{n-2(s-1)})
$$
which are the identity map on the first summand, where $s=\mathrm{min}\{k,n+1-k\}$.

Moreover, if $R:\g\to\gl(n+1)$ and $S:\g\to\gl(n+1)$ are faithful uniserial representations extending $R_{\al,k,X}$, then $R$ and $S$
are isomorphic if and only if they are equal.

\medskip

(3) Likewise, if $x$ has eigenvalues $\la$ and $2\la$, with $e=2$ and $n_2=1$, then $R_{\al,a}$ can be extended
to a faithful uniserial representation of $\g$. Let $0\neq w_0\in V_2$. Then the extensions of $R_{\al,a}$
to a faithful uniserial representation $\g\to\gl(n+2)$ are given by all possible functions
$$
w_0\to \be E^{1,n+2},\quad\be\in F,\; \be\neq 0.
$$
Moreover, if $R:\g\to\gl(n+2)$ and $S:\g\to\gl(n+2)$ are faithful uniserial representations extending $R_{\al,a}$, then $R$ and $S$
are isomorphic if and only if they are equal.

\medskip

(4) A necessary and sufficient condition for $\g$ to have a faithful uniserial representation
is that $x$ has Jordan decomposition (\ref{unosolo}) and (\ref{ere}) holds, or that $x$ has Jordan decomposition (\ref{dossolos})
and $n$ is odd.

\end{theorem}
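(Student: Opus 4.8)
\emph{Overview.} The plan is to pivot everything through the subalgebra $\g_1=\langle x_1\rangle\ltimes V_1$, whose defining block $J_n(\la)$ has $\la\neq0$ because $x\in\GL(V)$. First I would show that the restriction $R_1=R|_{\g_1}$ is faithful and uniserial, then apply Theorem \ref{main1} to identify $R_1$ with a unique $R_{\al,k,X}$ (forcing $d=n+1$) or $R_{\al,a}$ (forcing $d=n+2$), and finally read off the Jordan data of $x$ from the fact that $R|_V$ must be an injective $F[t]$-module map into the relevant eigenspace of $\ad A$, where $A=R(x)$ and $t$ acts by $\ad_\g x-\la 1_\g$ on $V$ and by $\theta=\ad A-\la 1$ on matrices (Lemma \ref{rad}).

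\emph{Part (1).} Fix a Lie basis, so $A$ is upper triangular and every $R(v)$ ($v\in V$) is strictly upper triangular. The engine is that $x\in\GL(V)$ makes every eigenvalue $\mu_i$ of $x$ on $V_i$ nonzero; hence each $R(V_i)$ lies in the generalized $\mu_i$-eigenspace of $\ad A$, so $R(V)$ meets the zero-eigenspace (the block-diagonal part relative to the eigenvalue grouping of $A$) only in $0$. Combined with the uniseriality criterion of \cite[Lemma 2.1]{CS} (the non-vanishing of some superdiagonal entry at each step) and Steps 1 and 2 of Theorem \ref{main1}, this forces $A$ to have a single Jordan block per eigenvalue, with consecutive eigenvalues differing by $\la$. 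At each index either $x$ (inside a block) or $v_0$ (across a block, by Step 1) has nonzero $(i,i+1)$-entry, and these elements lie in $\g_1$, so $R_1$ is uniserial and Theorem \ref{main1} applies. When $d=n+1$ the $\la$-eigenspace of $\ad A$ is exactly $\M_{k,n+1-k}$ and $\theta$ is nilpotent there, so $\ad_\g x-\la$ is nilpotent on $V$, i.e. $x$ has the single eigenvalue $\la$; Proposition \ref{sl2} gives $\M_{k,n+1-k}\cong\bigoplus_{j=1}^s F[t]/(t^{n-2(j-1)})$ with $s=\min\{k,n+1-k\}$, and an $F[t]$-monomorphism $V\cong\bigoplus_i F[t]/(t^{n_i})\hookrightarrow\M_{k,n+1-k}$ exists iff the elementary divisors dominate componentwise, i.e. iff $e\le s$ and $n_i\le n-2(i-1)$, which are precisely (\ref{ere2}) and (\ref{ere}) and in turn force $1<k<n$. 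When $d=n+2$, $A$ has the three eigenvalues $\al,\al-\la,\al-2\la$; the blocks of $V$ map into the $\la$- or $2\la$-eigenspaces, the latter being the single line $FE^{1,n+2}$, while the $\la$-eigenspace carries a symplectic form whose vanishing is exactly the commutativity of $R(V)$ (the bracket of its two ``arms'' lands in $FE^{1,n+2}$). Since $R(V_1)$ is already a Lagrangian of dimension $n$, no further block can have eigenvalue $\la$ and at most one dimension can have eigenvalue $2\la$, forcing $e=2$, $n_2=1$ and second eigenvalue $2\la$, i.e. (\ref{dossolos}).

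\emph{Parts (2) and (3).} For (2), extending $R_{\al,k,X}$ amounts to prescribing $R|_V$; as above its image lies in $\M_{k,n+1-k}$, and since the product of any two elements of $\M_{k,n+1-k}$ is $0$, commutativity of $R(V)$ is automatic. Thus the extensions are exactly the $F[t]$-module maps $V\to\M_{k,n+1-k}$ with $v_0\mapsto E_0$, faithful iff injective. Uniseriality is free: over $\g_1$ the module is $R_{\al,k,X}$, hence uniserial (Proposition \ref{conex}), and every $\g$-submodule, being a $\g_1$-submodule, lies in that chain. At least one such monomorphism exists under (\ref{ere})--(\ref{ere2}) because $\langle E_0\rangle\cong F[t]/(t^n)$ is a maximal cyclic, hence a direct summand, of $\M_{k,n+1-k}$, into whose complement the remaining $V_i$ inject by the componentwise criterion; this also yields the abstract description. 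Part (3) is the same argument with $FE^{1,n+2}$ replacing $\M_{k,n+1-k}$: one checks $E^{1,n+2}$ commutes with $R_{\al,a}(V_1)$, so $w_0\mapsto\be E^{1,n+2}$ is a representation, faithful iff $\be\neq0$, uniserial by restriction to $\g_1$ (Lemma \ref{nod}). In both parts the rigidity ``$R\cong S$ iff $R=S$'' reduces to showing that any $T$ centralizing $A$ and $E_0$ is scalar, precisely the computation in the proof of Proposition \ref{nonas}, since a scalar $T$ fixes every $R(v)$.

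\emph{Part (4) and the main obstacle.} Part (4) is then formal: necessity is (1), and for sufficiency, given (\ref{unosolo}) and (\ref{ere}), the bound $n_e\le n-2(e-1)$ together with $n_e\ge1$ gives $e\le\lfloor(n+1)/2\rfloor=\max_k\min\{k,n+1-k\}$, so some $k$ satisfies (\ref{ere2}) and part (2) (with, say, $X=0$) produces the representation; in case (\ref{dossolos}) with $n$ odd, part (3) applies, $R_{\al,a}$ existing exactly because $n$ is odd (Lemma \ref{nod}). I expect the main obstacle to be the reduction in part (1): first establishing that $R_1$ is uniserial (the delicate point being that $x$ invertible rules out within-eigenvalue Jordan boundaries, so the superdiagonal criterion can be realized inside $\g_1$), and then matching the elementary-divisor (componentwise) criterion for the $F[t]$-monomorphism with the exact numerical conditions (\ref{ere})--(\ref{ere2}); this is where Proposition \ref{sl2}, i.e. the Clebsch-Gordan computation, is indispensable, and one must be careful since the naive kernel-dimension inequalities are necessary but not sufficient for a monomorphism to exist. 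The symplectic rigidity producing (\ref{dossolos}) in the case $d=n+2$ is a separate, self-contained obstacle.
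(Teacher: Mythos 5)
Your overall architecture (restrict to $\g_1$, invoke Theorem \ref{main1}, convert the classification of extensions into $F[t]$-module monomorphisms via Proposition \ref{sl2} and the elementary-divisor domination criterion, and get rigidity from the centralizer computation in Proposition \ref{nonas}) is exactly the paper's, and your treatment of parts (2), (3) and (4) is essentially correct; your symplectic-form argument for why no second Jordan block of eigenvalue $\la$ can occur when $d=n+2$ is in fact a nice streamlining of the paper's explicit computation. The problem is that the entire load-bearing step of part (1) is asserted rather than proved. You claim that the block-diagonal grouping of $A$ by eigenvalues ``forces $A$ to have a single Jordan block per eigenvalue, with consecutive eigenvalues differing by $\la$,'' and that at each block boundary it is $v_0$ that supplies the nonzero superdiagonal entry, ``so $R_1$ is uniserial.'' Neither claim follows from what precedes it. The uniseriality criterion of \cite[Lemma 2.1]{CS} only guarantees that \emph{some} element of $\g$ has a nonzero $(i,i+1)$ entry; at a boundary between eigenvalue groups that element must come from $V$, but a priori from a generator of \emph{any} $V_j$, attached to \emph{any} eigenvalue $\mu_j$ of $x$ (so consecutive diagonal values of $A$ differ by some $\mu_j$, not necessarily by $\la$), and nothing you say prevents the chain of eigenvalue groups $J^{d_1}(\al_1),\dots,J^{d_\ell}(\al_\ell)$ from being arbitrarily long, with different boundaries witnessed by different $V_j$'s. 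Until $\ell$ is bounded you cannot conclude $d=n+1$ or $d=n+2$, nor that $R_1$ is uniserial.

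This is precisely where the paper spends most of its effort: it shows that $A$ cannot have three consecutive diagonal blocks $J^a(\al),J^b(\be),J^c(\ga)$ with $a>1$ (and, by duality, with $c>1$), hence $\ell=2$, or $\ell=3$ with $d_1=d_3=1$. The argument is a genuine computation with the commutativity of $V$: one must handle the case where the two boundaries are spanned by \emph{distinct} generators $v,w$ attached to possibly distinct eigenvalues $\la,\mu$ (the paper computes block $(1,3)$ of $[P(v),P(w_{b-1})]$ as $Sh-Yu$ and shows its last entry is nonzero), as well as the single-generator case (the $S_k=(-1)^kSM^k+(-1)^{k-1}kNSM^{k-1}$ computation leading to the contradiction $(-1)^{b-1}=1$, $(-1)^b(b-1)=1$). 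Only after this reduction does one know that $R(V)$ lands in $\M_{d_1,d_2}$ (or its three-block analogue), that all $\mu_j$ equal $\la$ (resp. $\la$ or $2\la$), that $d_1+d_2=n+1$ because $\theta$ has nilpotency index $d_1+d_2-1$ on $\M_{d_1,d_2}$ and $R$ is faithful, and hence that $R(v_0)$ has nonzero corner entry so that $R_1$ is uniserial. You correctly flag this reduction as ``the main obstacle,'' but your proposed resolution (``$x$ invertible rules out within-eigenvalue Jordan boundaries'') addresses only the easy half; supply the bound on $\ell$ and the attribution of the boundary entries to a single maximal generator, and the rest of your outline goes through.
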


\begin{proof} (1) Let $T:\g\to\gl(U)$ be a faithful uniserial representation, say of dimension~$d$.
Perusing the proof of Theorem \ref{main1}, we see that there is a basis $\B$ of $U$ such that the corresponding matrix
representation $R:\g\to\gl(d)$ satisfies:

$\bullet$ There are positive integers $\ell>1$ and $d_1,\dots,d_\ell$ such that
$$
d_1+\cdots+d_\ell=d,
$$
and relative to this decomposition each $R(y)$, $y\in\g$, is block upper triangular and each $R(v)$, $v\in V$, is strictly block upper triangular.

\medskip

$\bullet$ The diagonal blocks of $A=R(x)$ are $J^{d_1}(\al_1),\dots,J^{d_\ell}(\al_\ell)$, where each $\al_i-\al_{i+1}$ is an eigenvalue
of $x$ acting on $V$. Moreover, if $i<j$ and $\al_i\neq \al_j$ then block $(i,j)$ of $A$ is 0.

\medskip

$\bullet$ For each $1\leq i<\ell$ there is a generator $v$ such that block $(i,i+1)$ of $R(v)$ has non-zero bottom left entry.
Moreover, if $v$ is associated to the eigenvalue $\la$ of $x$, then necessarily $\al_i-\al_{i+1}=\la$.

\medskip

We claim that $d_i>1$ for at least one $i$. Suppose not. Then $A_{j,j+1}=0$ for all $1\leq j<d$. Since $R$ is uniserial and $V$
is abelian, there is a single generator $v$ such that $R(v)_{j,j+1}\neq 0$ for all $1\leq j<d$, whence
the diagonal entries of $A$ are in arithmetic progression of step $\la$, the eigenvalue associated to $v$. In particular, $A$ is diagonalizable.
Thus $\ad_{\gl(d)} A$ is diagonalizable, whence $\ad_{R(\g)} A$ is diagonalizable.
But $R$ is faithful, so $\ad_{\g} x$ is diagonalizable, which means that $x$ acts diagonalizably on $V$, against
the stated hypotheses.

By above, there is a diagonal block $J^a(\al)$ of $A$ such that $a>1$. Suppose, if possible, that $A$ has consecutive diagonal blocks
$J^a(\al),J^b(\be), J^c(\ga)$. Then $\la=\al-\be$ and $\mu=\be-\ga$ are eigenvalues of $x$.
Considering a suitable section of $U$ we obtain a representation $P:\g\to\gl(a+b+1)$ such that
$$
P(x)=\left(
    \begin{array}{c|c|c}
      J^{a}(\al) & 0 & z \\\hline
      0 & J^{b}(\al-\la) & 0 \\\hline
      0 & 0 & \al-(\la+\mu) \\
    \end{array}
  \right).
$$
Note that $z=0$ if $\la+\mu\neq 0$.

Let $S$ be the subspace of $\gl(a+b+1)$ consisting of all
strictly block upper triangular matrices. Let $S(1,2)$ be the subspace of all matrices in $S$ whose blocks $(i,j)\neq (1,2)$
are equal to 0, and define $S(2,3)$ and $S(1,3)$ likewise. Then the generalized eigenspace for the action of $\ad_{\gl(a+b+1)} P(x)$
on $S$ of eigenvalue $\la+\mu$ is $S(1,3)$. Moreover, the generalized eigenspace for the action of $\ad_{\gl(a+b+1)} P(x)$
on $S$ of eigenvalue $\la$ (resp. $\mu$) is $S(1,2)$ (resp. $S(2,3)$) if $\la\neq\mu$, and $S(1,2)\oplus S(2,3)$
if $\la=\mu$.

If a single generator $v$ has the property that both blocks $(1,2)$ and $(2,3)$ of $P(v)$ have non-zero bottom left entry, then, in particular, $\la=\mu$, and we may argue as in Step 10 of the proof of Theorem \ref{main1}
to see that the commutativity of $V$ is contradicted. Thus there exist generators $v$ and $w$, associated to eigenvalues $\la$ and~$\mu$, respectively, such that
$$
P(v)=\left(
    \begin{array}{c|c|c}
      0 & S & 0 \\\hline
      0 & 0 & u \\\hline
      0 & 0 & 0 \\
    \end{array}
  \right), P(w)=\left(
    \begin{array}{c|c|c}
      0 & T & 0 \\\hline
      0 & 0 & y \\\hline
      0 & 0 & 0 \\
    \end{array}
  \right),
$$
where
$$
S_{a,1}\neq 0, T_{a,1}=0,  y_b\neq 0.
$$
It should be noted that blocks $(1,3)$ of $P(v)$  and $P(w)$ are indeed 0, as $\la+\mu$ is different from $\la$ and~$\mu$, and
$P(v)$ and $P(w)$ are generalized eigenvectors for the action of $\ad_{\gl(a+b+1)} P(x)$ on~$S$ with eigenvalues
$\la$ and $\mu$, respectively.

Let $f$ be the dimension of the $F[x]$-submodule, say $W$, generated by $w$. It follows from Proposition \ref{conex} that $b\leq f$. Thus, the vectors $w_k=(\ad_\g x-\mu 1_\g)^k w$, $0\leq k\leq f-1$, form a basis of $W$ and $w_{b-1}$ is amongst them. We have
$$
P(w_{b-1})=(\ad_{\gl(a+b+1)} P(x)-\la 1_{\gl(a+b+1)})^{b-1}P(w_0).
$$
Since $T_{a,1}=0$ and $y_b\neq 0$, direct computation reveals that
$$
P(w_{b-1})=\left(
    \begin{array}{c|c|c}
      0 & Y & 0 \\\hline
      0 & 0 & h \\\hline
      0 & 0 & 0 \\
    \end{array}
  \right),
$$
where the last row of $Y$ is 0 and $h$ is a non-zero scalar multiple of first canonical vector of~$F^b$.
Thus block $(1,3)$ of $[P(v),P(w_{b-1})]$ is equal to
$$
Sh-Yu,
$$
whose last entry is non-zero since $S_{a,1}\neq 0$. This contradiction proves that $A$ cannot have consecutive diagonal blocks
$J^a(\al),J^b(\be), J^c(\ga)$ with $a>1$.
Arguing by duality, as in the proof of Step 10 of Theorem \ref{main1}, it follows that $A$ cannot have consecutive diagonal blocks $J^c(\ga),J^b(\be), J^a(\al)$ with $a>1$. We deduce that $\ell=2$, or $\ell=3$ and $d_1=1=d_3$.

Suppose first that $\ell=2$. The only eigenvalue of $\ad_{\gl(d)}A$ acting on $\M_{d_1,d_2}$ is $\la$.
 Since $R$ is faithful, the only eigenvalue of $x$ acting on $V$ must be $\la$. The case $d_1=1$ (resp. $d_2=1$), is easily seen to contradict the faithfulness of $R$, as $e>1$. Thus, $d_1>1$ and $d_2>1$. Let $v$ be a generator such that the bottom left entry of block (1,2) of $R(v)$ is non-zero.
It follows from Proposition \ref{conex} that $d_1+d_2=n+1$, where $n$ is the dimension of the $F[x]$-submodule of $V$ generated by $v$. Lemma \ref{rad} implies that the restriction of $R$ to $V$ is a monomorphism of $F[t]$-modules $V\to \M_{d_1,d_2}$,
where $t$ acts via $\ad_\g x-\la 1_\g$ on $V$ and via $\ad_{\gl(n+1)}A -\la 1_{\gl(n+1)}$ on $\M_{d_1,d_2}$.
It now follows from Proposition \ref{sl2} and the theory of finitely generated modules over a principal ideal domain,
that (\ref{ere2}) holds for $k=d_1$ and that the elementary divisors $t^{n_1},\dots,t^{n_e}$ of $V$ satisfy (\ref{ere}). It is clear that the restriction of $R$ to $\g_1$ is uniserial. By Theorem \ref{main1}, this restriction is isomorphic to a unique $R_{\al,k,X}$.

Suppose next that $\ell=3$ and $d_1=1=d_3$. Assume, if possible, that there exist generators $u,w$ such that
$$
R(u)=\left(
    \begin{array}{c|c|c}
      0 & u_1 & 0 \\\hline
      0 & 0 & u_2 \\\hline
      0 & 0 & 0 \\
    \end{array}
  \right), R(w)=\left(
    \begin{array}{c|c|c}
      0 & w_1 & 0 \\\hline
      0 & 0 & w_2 \\\hline
      0 & 0 & 0 \\
    \end{array}
  \right),
$$
where the first (reps. last) entry of $u_1$ (resp. $w_2$) is non-zero, and the last (reps. first) entry of $u_2$ (resp. $w_1$) is 0.
It follows that
$$
R([u,(\ad_\g x-\la 1_g)^{d_2-1}w])=[R(u),(\ad_{\gl(d)} R(x)-\la 1_{\gl(d)})^{d_2-1}R(w)]\neq 0,
$$
contradicting the fact that $V$ is abelian.

Since $R$ is uniserial, we deduce from above that there is a single generator~$u$
such that the bottom left entries of blocks $(1,2)$ and $(2,3)$ of $R(u)$ are non-zero. Let $\la$ be the eigenvalue associated to $u$
and set $c=d_2$. Since some $d_i>1$, it follows that $c>1$. We have
$$
A=\left(
    \begin{array}{c|c|c}
      \al & 0 & 0 \\\hline
      0 & J^c(\al-\la) & 0 \\\hline
      0 & 0 & \al-2\la \\
    \end{array}
  \right).
$$
Moreover, let $W$ be the $F[x]$-submodule of $V$ generated by $u$. Then
\begin{equation}
\label{spw}
R(w)=\left(
    \begin{array}{c|c|c}
      0 & w_1 & 0 \\\hline
      0 & 0 & w_2 \\\hline
      0 & 0 & 0 \\
    \end{array}
  \right),\quad w\in W.
\end{equation}
In the special case $w=u$, conjugating by a suitable block diagonal matrix, we may assume
$$u_1=(1,a_2,\dots,a_c)\text{ and }
u_2= \left(\begin{array}{c}
0 \\
\vdots \\
0 \\
1 \\
\end{array}
\right).
$$
Consider the subalgebra $\h=\langle y\rangle\ltimes W$ of $\g$, where $y=x|_W$. It is clear that the restriction, $S:\h\to\gl(d)$.
is uniserial. Since $S$ is a faithful representation, we see that $c=\dim(W)$. We see from Lemma \ref{nod} that $c$ is odd and $a_i=0$ for all even $i$, so $S$ is isomorphic to $R_{\al,a}$.

The Jordan decomposition of $\ad_{\gl(d)} A$ acting on the space of strictly block upper triangular matrices is
$$
J^c(\la)\oplus  J^c(\la)\oplus J^1(2\la).
$$
Suppose, if possible, that $x$ has another Jordan block with eigenvalue $\la$. Then there is a summand $V_i$ of $V$ different from $W$. Let $w\in V_i$ be an eigenvector of $\ad_\g x$. Since $[R(u),R(w)]=0$, we see that $w_1$ and $w_2$
are equal to the last and first canonical vectors, respectively, multiplied by the \emph{same} scalar. Since $c$ is odd, it follows that
$R(w)$ is a scalar multiple of $R((\ad_\g x-1_\g)^{c-1}u)$, which contradicts the faithfulness of~$R$.

Since $e>1$ and $R$ is faithful, we infer that the Jordan decomposition $\ad_g x$ acting on $V$ must be
\begin{equation}
\label{jox}
J^n(\la)\oplus J^1(2\la).
\end{equation}
By definition of $\g$, (\ref{jox}) is also the Jordan decomposition of $x$, so $c=n$, $e=2$ and $n_2=1$.

\medskip

(2) Thanks to the stated hypothesis we may use Proposition \ref{sl2} to produce an $F[t]$-monomorphism $V\to \M_{k,n+1-k}$ satisfying $v_0\mapsto R_{\al,k,X}(v_0)$, where $R_{\al,k,X}$ is understood to be a representation of $\g_1$. Any such monomorphism extends $R_{\al,e,X}$ to a faithful uniserial representation of $\g$. Conversely, Lemma \ref{rad} ensures that any extension of $R_{\al,e,X}$ to a faithful uniserial representation of $\g$ restricts to an $F[t]$-monomorphism $V\to \M_{k,n+1-k}$, which obviously satisfies $v_0\mapsto R_{\al,k,X}(v_0)$.

As shown in the proof of Proposition \ref{nonas}, the centralizer of $R(x),R(v_0)$ in $\gl(n+1)$ consists of scalar matrices, so different extensions of $R_{\al,k,X}$ from $\g_1$ to $\g$ yield non-isomorphic representations of $\g$.

\medskip

(3) The argument here is similar but much easier than the above.

\medskip

(4) The necessity of the stated conditions follows from item (1). Suppose that $x$ has Jordan decomposition (\ref{dossolos}) and $n$ is odd. Then $\g$ has a faithful uniserial representation by item (3). Suppose finally that $x$ has Jordan decomposition (\ref{unosolo}) and that (\ref{ere}) holds. We infer from (\ref{ere}) that $2e\leq n+1$, so $e<n$ and $e\leq\min\{e,n+1-e\}$. By hypothesis, we have $e>1$. It follows
from item (2) that $\g$ has a faithful uniserial representation.

\end{proof}

\section{Examples}\label{sec4}

Illustrative examples of Theorem \ref{main2} can be obtained by making explicit use of Proposition \ref{sl2}. Indeed, let us find $E(i)$
satisfying (\ref{eid}) for all $0\leq i\leq r=\min\{a,b\}$. As $E(i)\in S(i)$, there are scalars $\al_0,\dots,\al_i\in F$ such that
$$
E(i)=\underset{a+1-i\leq k\leq a+1}\sum \al_{a+1-k} E^{k,k+i-a}
$$
or, alternatively,
$$
E(0)=\al_0 E^{a+1,1}, E(1)=\al_1 E^{a,1}+\al_0 E^{a+1,2},
E(2)=\al_2 E^{a-1,1}+\al_1 E^{a,2}+\al_0 E^{a+1,3},
$$
$$
\dots, E(r)=\al_r E^{a+1-r,1}+\al_{r-1} E^{a+2-r,2}+\cdots+\al_0 E^{a+1,r+1}.
$$
If $i=0$ there is nothing to do. Suppose $i>0$. By means of (\ref{sr3}) and (\ref{sesa}), we find (\ref{eid}) equivalent to
$$
t(a+1-t)\al_t=(i+1-t)(b+t-i)\al_{t-1},\quad 0<t\leq i.
$$
Setting $\al_0=1$, we get the explicit solution
$$
\al_t=\underset{1\leq j\leq t}\prod \frac{(i+1-j)(b+j-i)}{j(a+1-j)},\quad 1\leq t\leq i.
$$

For instance, let $a=2$ and $b=4$. Then
$$
E(0)=\left(\begin{array}{ccccc}
             0 & 0 & 0 & 0 & 0\\
             0 & 0 & 0 & 0 & 0\\
             1 & 0 & 0 & 0 & 0\\
             \end{array}
         \right), E(1)= \left(\begin{array}{ccccc}
             0 & 0 & 0 & 0 & 0\\
             2 & 0 & 0 & 0 & 0\\
             0 & 1 & 0 & 0 & 0\\
             \end{array}
         \right),E(2)=\left(\begin{array}{ccccc}
             6 & 0 & 0 & 0 & 0\\
             0 & 3 & 0 & 0 & 0\\
             0 & 0 & 1 & 0 & 0\\
             \end{array}
         \right).
$$
Given $\al,\la\in F$, set $A=J^3(\al)\oplus J^5(\al-\la)$ and view $\M_{3,5}$ as an $F[t]$-module via the operator $\ad_{\gl(8)}A-\la 1_{\gl(8)}$.
Then
$$
\M_{3,5}=F[t] \widehat{E(0)}\oplus F[t] \widehat{E(1)}\oplus F[t] \widehat{E(2)},
$$
where $\widehat{E(0)},\widehat{E(1)},\widehat{E(2)}$ have minimal polynomials $t^7,t^5,t^3$ with respect to the action of $\ad_{\gl(8)}A-\la 1_{\gl(8)}$, or $e$, on $\M_{3,5}$.

Suppose $x$ acts on $V$ via $J^7(\la)\oplus J^5(\la)\oplus J^3(\la)$ on $V$, with respective $F[x]$-generators $v,w,u$
Then
$$
x\mapsto A, v\mapsto \widehat{E(0)}, w\mapsto \widehat{E(1)}, u\mapsto \widehat{E(2)}
$$
defines a faithful uniserial representation $R:\g\to\gl(8)$ extending $R_{\al,3,0}$ (which is defined by $x\mapsto A, v\mapsto \widehat{E(0)}$).

Replace $J^5(\la)\oplus J^3(\la)$ in the above example by $J^4(\la)\oplus J^2(\la)$. To obtain a representation of the new $\g$,
simply let $w\mapsto e\cdot \widehat{E(1)}$ and $u\mapsto e\cdot \widehat{E(2)}$.

Replace $J^5(\la)\oplus J^3(\la)$ in the first example by $J^1(\la)$. To obtain a representation of this latest $\g$,
we may let $w\mapsto e^4\cdot \widehat{E(1)}+e^2\cdot \widehat{E(2)}$ (there is no $u$ in this case).


\end{document}